\def\sd{\mathop{\rm sd}\nolimits}
\newtheorem{theorem}{Theorem}[section]
\newtheorem{lemma}[theorem]{Lemma}
\newenvironment{proof}{\noindent {\bf Proof}\ }{$\Box$\bigskip}
\numberwithin{equation}{section}
\newcommand{\R}{\mathbb{R}}
\newcommand{\E}{\mathrm{E}}
\newcommand{\p}{\mathrm{P}}
\newcommand{\eps}{\varepsilon}
\begin{document}
\title{Bayesian recovery of the initial condition for the heat equation\footnote{Appeared in \emph{Communications in Statistics -- Theory and Methods} Volume 42, Issue 7, pp. 1294--1313.}}

\author{B.T. Knapik\thanks{Corresponding author, Department of Mathematics, VU University Amsterdam, {\tt b.t.knapik@vu.nl}\newline  Research supported by Netherlands Organization for Scientific Research NWO},
A.W. van der Vaart\thanks{Mathematical Institute, Leiden University, {\tt avdvaart@math.leidenuniv.nl}} \
and J.H. van Zanten\thanks{Korteweg--de Vries Institute for Mathematics, University of Amsterdam, {\tt j.h.vanzanten@uva.nl}}}

\maketitle

\begin{abstract}
We study a Bayesian approach to recovering the initial condition for the heat equation from noisy observations
of the solution at a later time.
We consider a class of prior distributions indexed by a parameter quantifying  `smoothness' and show
that the corresponding posterior distributions contract around the true parameter at a rate that depends on the smoothness of the true initial condition and the smoothness and scale of the prior. Correct combinations of these characteristics lead to the optimal minimax rate.
One type of priors  leads to a rate-adaptive Bayesian procedure. The frequentist coverage of credible sets is shown to depend on the combination of the prior and true parameter as well, with smoother priors leading to zero coverage and rougher priors to (extremely) conservative results. In the latter case credible sets are much larger than frequentist confidence sets, in that the ratio of
diameters diverges to infinity. The results are numerically illustrated by a simulated data example.
\end{abstract}

\section{Introduction}

 Suppose a differential equation describes the evolution of some feature of a system (e.g., heat conduction), depending on its initial value (at time $t = 0$). We observe the feature at time $T > 0$, in the presence of noise or measurement errors,
and the aim is to recover the initial condition.
Inverse problems of this type are often ill-posed in the sense that the solution operator of the differential equation, which maps the function describing the initial state to the function that describes the state at the later time $T > 0$ at which we observe the system, does typically not have a well-behaved, continuous inverse.
This means that in many cases some form of regularization is necessary to solve the inverse problem and to deal with the noise.

In this paper we study a Bayesian approach to this problem for the particular example of recovering the initial condition
for the heat equation. Specifically, we assume we have noisy observations of the solution $u$ to the
Dirichlet problem for the heat equation
\begin{equation}\label{Heat}
\frac{\partial}{\partial t}u(x,t) = \frac{\partial^2}{\partial x^2} u(x,t), \quad u(x,0) = \mu(x), \quad u(0,t)=u(1,t) = 0,
\end{equation}
where $u$ is defined on $[0,1]\times[0,T]$ and the function $\mu \in L^2[0,1]$ satisfies  $\mu(0)=\mu(1)=0$.  The solution to (\ref{Heat})
is given by
\[
u(x,t) = \sqrt{2}\sum_{i=1}^\infty \mu_i e^{-i^2\pi^2t}\sin(i\pi x),
\]
where $(\mu_i)$ are the coordinates of $\mu$ in the basis $e_i =  \sqrt{2}\sin(i\pi x)$, for $i \geq 1$. In other words,
it holds that $u(\cdot, T) = K\mu$, for $K$ the linear operator on $L^2[0,1]$ that is
diagonalized by the basis $(e_i)$ and that has corresponding eigenvalues
$\kappa_i = \exp({-i^2\pi^2 T})$, for $i \geq 1$.
We assume we observe the solution $K\mu$ in white noise of intensity $1/n$. By expanding in the basis $(e_i)$ this is equivalent to
observing the sequence of noisy, transformed Fourier coefficients
 $Y = (Y_1, Y_2, \ldots)$ satisfying
\begin{equation}\label{Model}
Y_i = \kappa_i \mu_{i} + \frac{1}{\sqrt{n}}Z_i, \qquad i = 1, 2, \ldots,
\end{equation}
for $(\mu_i)$ and $(\kappa_i)$ as above, and $Z_1, Z_2, \ldots$ independent, standard normal random variables.
The aim is to recover the coefficients $\mu_i$, or equivalently, the initial condition $\mu = \sum_{i=1}^\infty \mu_ie_i$, under the assumption that the signal-to-noise ratio tends to infinity (so $n \to \infty$).

This heat conduction inverse problem has been studied in frequentist literature \citep[see, e.g.,][]{Bissantz, Cavalier, Cavalier2, Golubev, Mair2, Mair} and has also been addressed in Bayesian framework (with additional assumptions on the noise),  cf.\ \cite{Stuart}. For more background on how this backward heat conduction problem arises in practical problems, see for instance \cite{Beck} or \cite{Engl}, and the references therein. Since the $\kappa_i$ decay in a sub-Gaussian manner, the estimation of $\mu$ is very hard in general. It is well known for instance that
the minimax rate of estimation for $\mu$ in a Sobolev ball of regularity $\beta$ (see Sec. 1.1) relative to the $\ell^2$-loss is only  $(\log n)^{-\beta/2}$. This rate is attained by various methods, including generalized \emph{Tikhonov regularization} and spectral cut-off \citep{Bissantz, Mair2, Mair, Golubev}.

Convergence rates for Bayesian methods for problems like  (\ref{Model})
have only been studied for the case that $\kappa_i$ decays like a power of $i$, see \cite{KVZ}.
In this paper, like in \cite{KVZ}, we put  product priors of the form
\begin{equation}\label{Prior}
\Pi = \bigotimes_{i=1}^\infty  N(0, \lambda_i)
\end{equation}
on the sequence $(\mu_i)$ and study the corresponding sequence of posterior distributions.
The results we obtain  are different from the ones in \cite{KVZ} in a number of ways however.
First of all, it is in this case not true that to obtain optimal contraction rates for the posterior, we need
to match the regularities of the true sequence $\mu_0$ and the prior exactly. Any degree of oversmoothing
will do as well. Moreover, if the prior variances $\lambda_i$ are
chosen sub-Gaussian, then we obtain the optimal rate $(\log n)^{-\beta/2}$
for {\em any} $\beta$-regular $\mu_0$, i.e.,\ we obtain a rate-adaptive procedure.
Unfortunately however, these very smooth priors behave badly from another point of view. We show that asymptotically, the frequentist coverage
of credible sets based on these priors is $0$ for a very large class of true $\mu_0$'s. As in \cite{KVZ}
we see that asymptotic coverage $1$ is obtained when the prior is less  regular than the truth.
The radius of a credible set is in that case however of a strictly larger order than the radius of the corresponding frequentist
credible set, which is another difference with the findings in \cite{KVZ} for polynomial $\kappa_i$.

These statements are made precise and are refined to include the possibility of rescaling the priors in Sec. 2.
On a qualitative level, the conclusion of the results must be that in the severely ill-posed case that we study in this paper
it is advisable to use a prior that is slightly less regular than the truth, just as in the mildly ill-posed case of \cite{KVZ}.
 Unfortunately, the
corresponding Bayesian credible sets can be  very large in the present setting and hence of  limited use.
The results in Sec.~2 all deal with the recovery of the full parameter $\mu$.
In Sec.~3 we derive the analogous results for the problem of estimating
 linear functionals of $\mu$. The results are numerically illustrated in Sec.~4. Sec.~5 contains proofs of the results presented in Secs.~2 and 3. Auxiliary lemmas are presented in Sec.~6.

\subsection{Notation}

\noindent For $\beta > 0$, the Sobolev norm  $\|\mu\|_\beta$ and the $\ell^2$-norm $\|\mu\|$ of
an element $\mu \in \ell^2$ are defined by
\[
\|\mu\|_\beta^2 = \sum_{i=1}^\infty \mu_i^2i^{2\beta}, \qquad \|\mu\|^2 = \sum_{i=1}^\infty \mu_i^2,
\]
and the corresponding Sobolev space by $S^\beta = \{\mu \in \ell^2: \|\mu\|_\beta < \infty\}$.

For two sequences $(a_n)$ and $(b_n)$ of numbers,  $a_n \asymp b_n$ means that $|a_n/b_n|$ is bounded away from zero and infinity as $n \to \infty$, $a_n \lesssim b_n$ means that $a_n/b_n$ is bounded, $a_n \sim b_n$ means that $a_n/b_n \to 1$ as $n \to \infty$, and $a_n \ll b_n$ means that $a_n / b_n \to 0$ as $n \to \infty$. For two real numbers $a$ and $b$, we denote by $a \vee b$ their maximum, and by $a \wedge b$ their minimum.

\section{Recovering the full parameter}

\noindent Under the model \eqref{Model} and the prior \eqref{Prior} the coordinates $(\mu_{0,i}, Y_i)$ of the vector $(\mu_0, Y)$ are independent, and hence the conditional distribution of $\mu_0$ given $Y$ factorizes over the coordinates as well. Thus the computation of the posterior distribution reduces to countably many posterior computations in conjugate normal models. It is straightforward to verify that the posterior distribution $\Pi_n(\,\cdot \mid Y)$ is given by
\begin{equation}\label{PosteriorFull}
\Pi_n(\,\cdot \mid Y) = \bigotimes_{i=1}^\infty  N\biggl(\frac{n\lambda_i\kappa_i}{1+n\lambda_i\kappa_i^2}Y_i,\frac{\lambda_i}{1+n\lambda_i\kappa_i^2}\biggr).
\end{equation}

Our first theorem shows that the posterior contracts as $n \to \infty$ to the true parameter at a rate $\eps_n$
and quantifies how this rate  depends on the behavior of the sequence $(\lambda_i)$ of prior variances  and the
regularity  $\beta$ of the true parameter $\mu_0$. We say the posterior {\em contracts around $\mu_0$ at the rate
$\eps_n$} if
\[
\E_{\mu_0}\Pi_n(\mu:\|\mu-\mu_0\|\geq M_n\eps_n\mid Y)\to 0
\]
 for every $M_n \to \infty$, where the expectation is under the true model governed by the parameter $\mu_0$.

\begin{theorem}\label{RateFullS}
Suppose the true parameter $\mu_0$ belongs to $S^\beta$ for $\beta > 0$.

If $\lambda_i = \tau_n^2i^{-1-2\alpha}$ for some $\alpha > 0$ and $\tau_n >0$ such that $n\tau_n^2\to \infty$, then
the posterior contracts around $\mu_0$ at the rate
\begin{equation}\label{RateF1}
\eps_n = (\log  n\tau_n^2)^{-\beta/2} +  \tau_n(\log n\tau_n^2)^{-\alpha/2}.
\end{equation}
The rate is uniform over $\mu_0$ in balls in $S^\beta$. In particular:
\begin{itemize}
\item[(i)] If $\tau_n \equiv 1$, then $\eps_n = (\log n)^{-(\beta\wedge \alpha)/2}$.
\item[(ii)] If $n^{-1/2+\delta} \lesssim \tau_n \lesssim (\log n)^{(\alpha-\beta)/2}$, for some $\delta > 0$, then $\eps_n = (\log n)^{-\beta/2}$.
\end{itemize}

If $\lambda_i = e^{-\alpha i^2}$ for some $\alpha > 0$ then
the posterior contracts around $\mu_0$ at the rate
\begin{equation}\label{RateF2}
\eps_n = (\log n)^{-\beta/2}.
\end{equation}
The rate is uniform over $\mu_0$ in balls in $S^\beta$.
\end{theorem}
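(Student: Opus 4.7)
The plan is to reduce the claim to a second-moment estimate via Markov's inequality. Since
\[
\Pi_n\bigl(\mu:\|\mu-\mu_0\|\ge M_n\eps_n\mid Y\bigr)\le \frac{1}{M_n^2\eps_n^2}\int\|\mu-\mu_0\|^2\,d\Pi_n(\mu\mid Y),
\]
it suffices to show $\E_{\mu_0}\int\|\mu-\mu_0\|^2\,d\Pi_n(\mu\mid Y)\lesssim\eps_n^2$ uniformly over $\mu_0$ in a ball of $S^\beta$, since then dividing by $M_n^2\to\infty$ yields the claim. Using the explicit product form \eqref{PosteriorFull} and the coordinatewise bias--variance decomposition applied to $Y_i=\kappa_i\mu_{0,i}+Z_i/\sqrt n$, the left-hand side equals
\[
\sum_{i=1}^\infty\biggl[\frac{\lambda_i}{1+a_i}+\frac{\lambda_i\,a_i}{(1+a_i)^2}+\frac{\mu_{0,i}^2}{(1+a_i)^2}\biggr],
\qquad a_i:=n\lambda_i\kappa_i^2,
\]
where the first two contributions (posterior variance and variance of the posterior mean) are together at most $2\lambda_i/(1+a_i)$ because $a_i/(1+a_i)^2\le 1/(1+a_i)$.

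Next I would introduce the cutoff $N=N(n)$ defined as the largest integer with $a_N\ge 1$. Because $\kappa_i^2=e^{-2\pi^2 T i^2}$ decays like a Gaussian in $i$, taking logarithms yields $N\asymp\sqrt{\log(n\tau_n^2)}$ in the polynomial-prior case and $N\asymp\sqrt{\log n}$ in the Gaussian-decay case. For the bias sum, write $\mu_{0,i}^2=i^{-2\beta}\cdot i^{2\beta}\mu_{0,i}^2$, so
\[
\sum_i\frac{\mu_{0,i}^2}{(1+a_i)^2}\le\|\mu_0\|_\beta^2\;\sup_{i\ge 1}\frac{i^{-2\beta}}{(1+a_i)^2}.
\]
For $i\ge N$ the summand is at most $N^{-2\beta}$; for $i<N$ one uses $(1+a_i)^{-2}\le a_i^{-2}$ and substitutes the explicit formulas for $a_i$, after which the Gaussian factor $\kappa_i^{-4}$ dominates the power-of-$i$ factor and the resulting expression is monotone increasing in $i$, so is again maximized at $i\asymp N$ with value $\lesssim N^{-2\beta}$.

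For the variance sum I would split at $N$ again. On $\{i<N\}$ one has $\lambda_i/(1+a_i)\le\lambda_i/a_i=1/(n\kappa_i^2)=e^{2\pi^2 Ti^2}/n$, a super-exponentially growing sequence whose partial sum up to $N$ is of the order of its last term, which by definition of $N$ is $\asymp\lambda_N$. On $\{i\ge N\}$ we use $\lambda_i/(1+a_i)\le\lambda_i$ and sum the tail. In the polynomial case both contributions are $\lesssim\tau_n^2 N^{-2\alpha}\asymp\tau_n^2(\log n\tau_n^2)^{-\alpha}$, while in the Gaussian-variance case both are $\lesssim e^{-\alpha N^2}$, negligible next to $N^{-2\beta}$. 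Combining with the bias bound and substituting $N$ gives $\eps_n^2$ as stated; all bounds depend on $\mu_0$ only through $\|\mu_0\|_\beta$, hence are uniform over Sobolev balls. The specializations (i) and (ii) follow by plugging in $\tau_n\equiv 1$ and checking that $\log(n\tau_n^2)\asymp\log n$ together with $\tau_n(\log n)^{-\alpha/2}\lesssim(\log n)^{-\beta/2}$ in the prescribed range.

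The main obstacle is the bias estimate: one must arrange the three decay rates (the Sobolev weight $i^{-2\beta}$, the eigenvalues $\kappa_i^2$ of $K^*K$, and the prior variances $\lambda_i$) so that the supremum $\sup_i i^{-2\beta}/(1+a_i)^2$ is attained at the data-driven cutoff $N$, rather than at small $i$ (where $i^{-2\beta}$ is large but $a_i$ is enormous) or at large $i$ (where $a_i\to 0$ but $i^{-2\beta}$ is tiny). It is exactly this balance at $N\asymp\sqrt{\log(n\tau_n^2)}$ that produces the logarithmic rate characteristic of severely ill-posed problems, and that explains why even priors as smooth as $\lambda_i=e^{-\alpha i^2}$ still yield the minimax rate $(\log n)^{-\beta/2}$.
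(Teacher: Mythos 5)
Your proof is correct and follows essentially the same route as the paper's: reduce to the second moment via Markov's inequality, decompose into squared bias plus variance of the posterior mean plus posterior spread, and split each series at the cutoff $N\asymp\sqrt{\log(n\tau_n^2)}$ where $n\lambda_i\kappa_i^2\approx1$. The paper packages these series estimates into the generic Lemmas~\ref{Norm} and \ref{Series} and applies them with various parameter substitutions, whereas you carry them out inline and additionally merge the two variance-type terms via $a_i/(1+a_i)^2\le1/(1+a_i)$ --- a harmless simplification, not a different method.
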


We think of the  parameters $\beta$ and $\alpha$ as the regularity of the true parameter $\mu_0$ and the prior, respectively. The first is validated by the fact that in the heat equation case $(e_i)$ is the (sine) Fourier basis of $L^2[0,1]$. Therefore $\beta$
quantifies the smoothness of $\mu_0$ in Sobolev sense. In case of the polynomial decay of the variances of the prior (later referred to as the polynomial prior), the parameter $\alpha$ is also closely related to  Sobolev regularity.

The minimax rate of convergence over a Sobolev ball $S^\beta$ is of the order $(\log n)^{-\beta/2}$.
Now consider the case  $\lambda_i = \tau_n^2i^{-1-2\alpha}$. By statement (i) of the theorem the posterior
contracts at the optimal minimax rate  if the regularity of the prior is at least the regularity of the truth ($\alpha \geq \beta$) and the scale $\tau_n$ is fixed. Alternatively, the optimal rate is also attained by appropriately scaling a prior of any regularity. Note that if $\alpha \geq \beta$ scaling is redundant.
The theorem shows that `correct' specification of the prior regularity gives the optimal rate. In contrast to \cite{KVZ}
however, the regularity of the prior does not have to match exactly the regularity of the truth. Moreover, even though rough priors still need to be scaled to give the optimal rate, there is no restriction on the `roughness'.

The second assertion of the theorem shows that for very smooth priors (where we take $\lambda_i = e^{-\alpha i^2}$) the contraction rate is always optimal. Since the prior does not depend on the unknown regularity $\beta$,
the procedure is  \emph{rate-adaptive} in this case.

Both choices of priors lead to the conclusion that oversmoothing yields the optimal rate, and this has been noted also in the frequentist literature \citep[see][]{Mair2}. A fully adaptive frequentist method is presented in  \cite{Bissantz}, and in both situations
the optimal performance is caused by the dominating  bias. However, in Bayesian inference one often takes the spread in the posterior distribution as a quantification of uncertainty. If $\lambda_i = e^{-\alpha i^2}$ this spread is much smaller than the minimax rate.
To understand the implications,  we next consider the frequentist coverage of credible sets. As the posterior is Gaussian, it is natural to center a credible region at the posterior mean. Different shapes of such a set could be considered, but the natural counterpart of the preceding theorem is to consider balls. The study of linear functionals in the next section makes it possible to consider pointwise credible bands as well.

A credible ball centered at the posterior mean $\hat\mu$, where $\hat \mu_i = n\lambda_i\kappa_i(1+n\lambda_i\kappa_i^2)^{-1}Y_i$, takes the form
\begin{equation}\label{CredibleBall}
\hat\mu + B(r_{n,\gamma}):= \bigl\{\mu \in \ell^2 : \|\mu-\hat\mu\| < r_{n,\gamma}\bigr\},
\end{equation}
where $B(r)$ denotes an $\ell^2$-ball of radius $r$ around $0$
and  the radius $r_{n,\gamma}$ is determined such that
\begin{equation}\label{Credibility}
\Pi_n\bigl(\hat\mu + B(r_{n,\gamma})\mid Y\bigr) = 1-\gamma.
\end{equation}
Because the spread of the posterior is not dependent on the data, neither is the radius $r_{n,\gamma}$. The frequentist \emph{coverage} or confidence of the set \eqref{CredibleBall} is, by definition,
\begin{equation}\label{AsymptoticCov}
\p_{\mu_0}\bigl(\mu_0\in \hat\mu + B(r_{n,\gamma})\bigr),
\end{equation}
where under the probability measure $\p_{\mu_0}$ the variable $Y$ follows \eqref{Model} with $\mu = \mu_0$. We shall consider the coverage as $n \to \infty$ for fixed $\mu_0$, uniformly in Sobolev balls, and also along sequences $\mu_0^n$ that change with $n$.

The following theorem shows that the relation of the coverage to the credibility level $1-\gamma$ is mediated by
the regularity of the true $\mu_0$ and the two parameters controlling the regularity of the prior---$\alpha$ and the scaling $\tau_n$---for both types of priors. For further insight, the credible region is also compared to the `correct' frequentist confidence ball $\hat\mu +B(\tilde r_{n,\gamma})$ chosen so that the probability in~\eqref{AsymptoticCov} is exactly equal to $1-\gamma$.

\begin{theorem}\label{CredibilityFull}
Suppose the true parameter $\mu_0$ belongs to $S^\beta$ for $\beta > 0$.

If  $\lambda_i = \tau_n^2i^{-1-2\alpha}$ for some $\alpha > 0$ and $\tau_n >0$ such that $n\tau_n^2\to \infty$, then asymptotic coverage of the credible region~\eqref{CredibleBall} is
\begin{itemize}
\item[(i)] 1, uniformly in $\mu_0$ with $\|\mu_0\|_\beta \leq 1$, if $\tau_n \gg (\log n)^{(\alpha - \beta)/2}$; in this case $r_{n,\gamma} /\tilde r_{n,\gamma} \to \infty$.
\item[(ii)] 1, uniformly in $\mu_0$ with $\|\mu_0\|_\beta \leq r$ for $r$ small enough, if $\tau_n \asymp (\log n)^{(\alpha - \beta)/2}$;\\
1, for every fixed $\mu_0 \in S^\beta$, if $\tau_n \asymp (\log n)^{(\alpha - \beta)/2}$.
\item [(iii)] 0, along some $\mu_0^n$ with $\sup_n \bigl\|\mu_0^n\bigr\|_\beta <\infty$, if $\tau_n \lesssim (\log n)^{(\alpha - \beta)/2}$.
\end{itemize}

If $\lambda_i = e^{-\alpha i^2}$ for some $\alpha > 0$, then the asymptotic coverage of the credible region~\eqref{CredibleBall} is
\begin{itemize}
\item[(iv)] 0, for every $\mu_0$ such that $|\mu_{0,i}| \gtrsim e^{-ci^2/2}$ for some $c < \alpha$.
\end{itemize}
If $\tau_n \equiv 1$, then the cases (i), (ii), and (iii) arise if $\alpha < \beta$, $\alpha = \beta$ and $\alpha \geq \beta$, respectively. If $\alpha > \beta$ in case $(iii)$ the sequence $\mu_0^n$ can then be chosen fixed.
\end{theorem}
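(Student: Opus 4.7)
The plan is to reduce both the credible radius $r_{n,\gamma}$ and the true squared error $\|\mu_0-\hat\mu\|^2$ to chi-square--type sums, and then to compare three deterministic scalars: the posterior spread $V_n := \sum_i s_i^2$ with $s_i^2 = \lambda_i/(1+n\lambda_i\kappa_i^2)$, the squared bias $b_n^2 := \sum_i \mu_{0,i}^2/(1+n\lambda_i\kappa_i^2)^2$, and the variance contribution $\sigma_n^2 := \sum_i n\lambda_i^2\kappa_i^2/(1+n\lambda_i\kappa_i^2)^2$. From \eqref{PosteriorFull}, under $\Pi_n(\,\cdot\mid Y)$ the variable $\|\mu-\hat\mu\|^2$ has the same distribution as $\sum_i s_i^2 W_i^2$ with $W_i$ iid standard normal, so Chebyshev yields $r_{n,\gamma}^2 = V_n(1+o(1))$ as soon as $\sum_i s_i^4 = o(V_n^2)$. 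From \eqref{Model}, $\hat\mu_i-\mu_{0,i} = -\mu_{0,i}/(1+n\lambda_i\kappa_i^2) + \sqrt{n}\lambda_i\kappa_i/(1+n\lambda_i\kappa_i^2)\,Z_i$, so under $\p_{\mu_0}$ the squared error has mean $b_n^2+\sigma_n^2$ and a variance that is again a sum of squares; a second Chebyshev argument reduces the coverage statement to showing that $V_n$ dominates $b_n^2+\sigma_n^2$ for cases (i)--(ii) and that $b_n^2$ dominates $V_n$ (by the appropriate margin) for cases (iii)--(iv).

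The second step is to evaluate these three quantities. For the polynomial prior $\lambda_i=\tau_n^2 i^{-1-2\alpha}$, the super-exponential decay $\kappa_i^2=e^{-2i^2\pi^2 T}$ produces a sharp transition at $i_n \asymp (\log(n\tau_n^2))^{1/2}$: for $i<i_n$ one has $n\lambda_i\kappa_i^2\gg 1$, and for $i>i_n$ the reverse. Splitting both sums at $i_n$ gives $V_n\asymp \tau_n^2(\log(n\tau_n^2))^{-\alpha}$, while $\sigma_n^2$ is dominated by the final term of $\sum_{i\le i_n} 1/(n\kappa_i^2)$ and is therefore of strictly smaller order than $V_n$, and $b_n^2 \le \|\mu_0\|_\beta^2 i_n^{-2\beta}\lesssim (\log(n\tau_n^2))^{-\beta}$. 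Case (i) follows because $V_n/b_n^2 \gtrsim \tau_n^2(\log n)^{\beta-\alpha}\to\infty$ uniformly in Sobolev balls; case (ii) at the matching order requires a constant-sharp version of the same inequality, which holds under smallness of $\|\mu_0\|_\beta$, and for arbitrary fixed $\mu_0\in S^\beta$ is upgraded by the tail bound $\sum_{i>i_n}\mu_{0,i}^2 = o(i_n^{-2\beta})$. For case (iii) I would construct $\mu_0^n$ with a single active coordinate $\mu^n_{0,i_n}\asymp i_n^{-\beta}$, so that $b_n^2\asymp(\log n)^{-\beta}\gg V_n$ once $\tau_n\lesssim(\log n)^{(\alpha-\beta)/2}$. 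Part (iv) follows the same template: with $\lambda_i=e^{-\alpha i^2}$ the transition is at $i_n^2 \asymp \log n/(\alpha+2\pi^2 T)$ and $V_n\asymp e^{-\alpha i_n^2}$, whereas $|\mu_{0,i}|\gtrsim e^{-ci^2/2}$ for $c<\alpha$ forces $b_n^2\gtrsim e^{-ci_n^2}\gg V_n$ by a factor $e^{(\alpha-c)i_n^2}$. The comparison $r_{n,\gamma}/\tilde r_{n,\gamma}\to\infty$ in (i) is then immediate from $r_{n,\gamma}^2\asymp V_n$ and $\tilde r_{n,\gamma}^2\asymp b_n^2+\sigma_n^2\ll V_n$.

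The main obstacle I anticipate is the boundary regime (ii), where $V_n$ and $b_n^2$ are of the same logarithmic order and the conclusion hinges on the precise multiplicative constants arising from the index-splitting at $i_n$; this is what forces the qualifier ``$r$ small enough'' in the uniform statement and what requires the finer tail-of-Sobolev-series estimate to recover the pointwise conclusion. Controlling the chi-square fluctuations in the same regime is also delicate, because $\var_{\p_{\mu_0}}\|\hat\mu-\mu_0\|^2$ contains a bias--variance cross term $\sum_i \mu_{0,i}^2 n\lambda_i^2\kappa_i^2/(1+n\lambda_i\kappa_i^2)^4$ that can compete with the squared mean; I expect both issues to be handled by the same index-splitting at $i_n$ used to establish Theorem~\ref{RateFullS}, combined with a Markov inequality on the second moment of $\|\hat\mu-\mu_0\|^2-r_{n,\gamma}^2$.
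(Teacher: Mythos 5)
Your proposal is correct in substance and uses the same core ingredients as the paper: the index split at the transition point $i_n \asymp \sqrt{\log(n\tau_n^2)}$, the resulting scalar estimates for the posterior spread $V_n \asymp \tau_n^2(\log n\tau_n^2)^{-\alpha}$, the bias $b_n^2 \lesssim (\log n\tau_n^2)^{-\beta}$, and the variance $\sigma_n^2 \asymp \tau_n^2(\log n\tau_n^2)^{-1/2-\alpha} \ll V_n$, and the conclusion that coverage is governed by the comparison of $V_n$ against $b_n^2$. Where you differ from the paper is the probabilistic reduction. You propose a direct Chebyshev argument on the random variable $\|\hat\mu-\mu_0\|^2$, which has mean $b_n^2+\sigma_n^2$ but whose variance contains the bias--variance cross term $4\sum_i b_i^2 t_{i,n}$ that you correctly flag as a potential competitor. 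The paper sidesteps this entirely: it writes the coverage as $\p(\|W_n + \E_{\mu_0}\hat\mu - \mu_0\|\le r_{n,\gamma})$ with $W_n$ centred Gaussian, applies the triangle inequality to reduce to $\p(\|W_n\|\le r_{n,\gamma}-B_n)$ (or the reverse for the lower bounds), and then needs only the concentration of the chi-square--type sum $V_n=\|W_n\|^2$ about its mean. No cross term ever appears. Your route does go through --- the cross term is of order $B_n^2\lambda_{i_n}$, which is $o(r_{n,\gamma}^4)$ precisely because the same exponential localization at $i_n$ puts all the mass near a single index, so $\max_i$ in the cross term can be replaced by $\lambda_{i_n}$ rather than $\lambda_1$ --- but this requires an extra computation that the paper's decomposition makes unnecessary. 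Two smaller omissions: in case (ii) you invoke a ``constant-sharp version'' for small $\|\mu_0\|_\beta$ without making explicit that the relevant inequality is $B_n(r)\lesssim r\cdot r_{n,\gamma}$ so that $r_{n,\gamma}-B_n(r)\gtrsim r_{n,\gamma}$ for $r$ small; and for the final statement of the theorem (case (iii) with $\tau_n\equiv 1$, $\alpha>\beta$, fixed $\mu_0$) you do not supply the explicit slowly-decaying choice $\mu_{0,i}\asymp i^{-1/2-\beta-\eps}$, needed because a single-spike sequence cannot be held fixed in $n$.
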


The easiest interpretation of the theorem is in the situation without scaling $(\tau_n\equiv 1)$. Then oversmoothing the prior (case (iii): polynomial prior with $\alpha > \beta$, and case (iv): exponential prior) has disastrous consequences for the coverage of the credible sets, whereas undersmoothing (case (i): polynomial prior with $\alpha < \beta$) leads to (very) conservative sets. Choosing a prior of correct regularity (case (ii) and (iii): polynomial prior with $\alpha = \beta$) gives mixed results, depending on the norm of the true $\mu_0$. These conclusions are analogous to the ones that can be drawn from Theorem 4.2 in \cite{KVZ} for the mildly ill-posed case.

There is one crucial difference, namely the radius of the conservative sets in case (i) are not of the correct order of magnitude. It means that the radius $\tilde r_{n,\gamma}$ of the `correct' frequentist confidence ball is of strictly smaller order than the radius of the Bayesian credible ball.

By Theorem~\ref{RateFullS} the optimal contraction rate is obtained by smooth priors. Combining the two theorems leads to the conclusion that polynomial priors that slightly undersmooth the truth might be preferable. They attain a nearly optimal rate of contraction and the spread of their posterior gives a reasonable sense of uncertainty. Slightly undersmoothing is only possible however if an assumption about the regularity of the unknown true function is made. It is an important open problem to devise methods that achieve this automatically, without knowledge about the true regularity. Exponential priors, although adaptive and rate-optimal,
often lead to very bad pointwise credible bands.
%

\section{Recovering linear functionals of the parameter}

\noindent In this section we consider the posterior distribution of a linear functional $L\mu$ of the parameter. In the Bayesian setting we consider \emph{measurable linear functionals relative to the prior}, covering the class of continuous functionals, but also certain discontinuous functionals (for instance point evaluation), following the definition of \cite{Skorohod}. Let $(l_i) \in \R^\infty$ satisfy $\sum_{i=1}^\infty l_i^2\lambda_i < \infty$. Then it can be shown that $L\mu := \lim_{n\to \infty} \sum_{i=1}^n l_i\mu_i$ exists for all $\mu = (\mu_i)$ in a (measurable) subspace of $\ell^2$ with $\bigotimes_{i=1}^\infty  N(0, \lambda_i)$-probability one. We define $L\mu = 0$ if the limit does not exist.

The posterior of the linear functional $L\mu$ can be obtained from \eqref{PosteriorFull} and the definition given above \citep[see also][]{KVZ}
\begin{equation}\label{PosteriorMargin}
\Pi_n(\mu: L\mu \in \cdot\mid Y) = N\biggl(\sum_{i=1}^\infty \frac{nl_i\lambda_i\kappa_i}{1+n\lambda_i\kappa_i^2}Y_i,
\sum_{i=1}^\infty \frac{l_i^2\lambda_i}{1+n\lambda_i\kappa_i^2}\biggr).
\end{equation}
We measure the smoothness of the functional $L$ by the size of the coefficients $l_i$, as $i \to \infty$. It is natural to assume that the sequence $(l_i)$ is in the Sobolev space $S^q$ for some $q$, but also more controlled behavior will be assumed in following theorems.
We say that the {\em marginal posterior of $L\mu$ contracts around $L\mu_0$ at the rate $\eps_n$} if
\[
\E_{\mu_0}\Pi_n(\mu:|L\mu-L\mu_0|\geq M_n\eps_n\mid Y)\to 0
\]
as $n \to \infty$, for every sequence $M_n \to \infty$.

\begin{theorem}\label{RateFunctS}
Suppose the true parameter $\mu_0$ belongs to $S^\beta$ for $\beta > 0$.

If $\lambda_i = \tau_n^2 i^{-1-2\alpha}$ for some $\alpha > 0$ and $\tau_n > 0$ such that $n\tau_n^2\to \infty$, and the representer $(l_i)$ of the linear functional $L$ is contained in $S^q$, or $|l_i| \lesssim i^{-q-1/2}$ for some $q \geq -\beta$,
then the marginal posterior of $L\mu$ contracts around $L\mu_0$ at the rate
\begin{equation}\label{RateL1}
\eps_n = (\log n\tau_n^2)^{-(\beta+q)/2} +  \tau_n(\log n\tau_n^2)^{-(1/2+\alpha+q)/2}.
\end{equation}
The rate is uniform over $\mu_0$ in balls in $S^\beta$. In particular:
\begin{itemize}
\item[(i)] If $\tau_n \equiv 1$, then $\eps_n = (\log n)^{-(\beta\wedge (1/2+\alpha)+q)/2}$.
\item[(ii)] If $n^{-1/2+\delta} \lesssim \tau_n \lesssim (\log n)^{(1/2+\alpha-\beta)/2}$, for some $\delta > 0$,  then $\eps_n = (\log n)^{-(\beta+q)/2}$.
\end{itemize}

If $\lambda_i= e^{-\alpha i^2}$ for some $\alpha > 0$ then the marginal posterior of $L\mu$ contracts around $L\mu_0$ at the rate
\begin{equation}\label{RateL2}
\eps_n = (\log n)^{-(\beta+q)/2}.
\end{equation}
The rate is uniform over $\mu_0$ in balls in $S^\beta$.
\end{theorem}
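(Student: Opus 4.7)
The plan is to exploit the Gaussian structure of the marginal posterior in~\eqref{PosteriorMargin}. For any $M>0$, Markov's inequality applied inside the posterior gives
\[
\Pi_n\bigl(|L\mu-L\mu_0|\geq M\eps_n \,\big|\, Y\bigr) \leq \frac{(\hat L_n-L\mu_0)^2 + W_n}{M^2\eps_n^2},
\]
where $\hat L_n$ is the posterior mean and $W_n$ the non-random posterior variance in~\eqref{PosteriorMargin}. Substituting $Y_i = \kappa_i\mu_{0,i}+Z_i/\sqrt{n}$ and taking the expectation under $\p_{\mu_0}$ yields the bias--variance decomposition
\[
\E_{\mu_0}\Pi_n\bigl(|L\mu-L\mu_0|\geq M\eps_n \,\big|\, Y\bigr) \leq \frac{B_n^2 + V_n + W_n}{M^2\eps_n^2},
\]
with $B_n = \sum_i l_i\mu_{0,i}/(1+n\lambda_i\kappa_i^2)$ the bias and $V_n = \sum_i nl_i^2\lambda_i^2\kappa_i^2/(1+n\lambda_i\kappa_i^2)^2$ the frequentist variance of $\hat L_n$. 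Since $n\lambda_i\kappa_i^2/(1+n\lambda_i\kappa_i^2) \leq 1$, one has $V_n \leq W_n$, so it suffices to show $B_n^2 + W_n \lesssim \eps_n^2$ uniformly for $\|\mu_0\|_\beta$ bounded.

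Because $\kappa_i^2 = e^{-2i^2\pi^2 T}$ decays super-polynomially, $n\lambda_i\kappa_i^2$ transitions from being much larger than $1$ to being much smaller than $1$ around a critical index $i_n$ with $i_n^2 \asymp (2\pi^2 T)^{-1}\log(n\tau_n^2)$ in the polynomial case and $i_n^2 \asymp (\alpha+2\pi^2 T)^{-1}\log n$ in the exponential case; in both cases $i_n \asymp \sqrt{\log(n\tau_n^2)}$. For the bias, Cauchy--Schwarz with weights $i^{\pm q}$ and the identity $\mu_{0,i}^2 i^{-2q} = (\mu_{0,i}^2 i^{2\beta})\cdot i^{-2q-2\beta}$ give
\[
B_n^2 \leq \|l\|_q^2 \sum_i \frac{\mu_{0,i}^2 i^{-2q}}{(1+n\lambda_i\kappa_i^2)^2} \leq \|l\|_q^2\|\mu_0\|_\beta^2 \sup_i \frac{i^{-2(q+\beta)}}{(1+n\lambda_i\kappa_i^2)^2}.
\]
Under $q\geq -\beta$ the factor $i^{-2(q+\beta)}$ is nonincreasing, and a direct computation shows the supremum is attained near $i = i_n$ and is of order $i_n^{-2(q+\beta)}$; an analogous argument handles the alternative hypothesis $|l_i|\lesssim i^{-q-1/2}$. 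The variance $W_n$ is treated by splitting at $i_n$: on the tail $1+n\lambda_i\kappa_i^2 \approx 1$ gives $\tau_n^2\sum_{i>i_n}i^{-2-2\alpha-2q} \asymp \tau_n^2 i_n^{-1-2\alpha-2q}$, while on the head the bound $1+n\lambda_i\kappa_i^2 \geq n\lambda_i\kappa_i^2$ produces a factor $1/(n\kappa_i^2) = e^{2i^2\pi^2 T}/n$ that peaks at $i_n$ and yields the same order by geometric summation. For the exponential prior $W_n$ is of polynomial-in-$n$ order and is therefore dominated by the bias.

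Translating $i_n^{-2(q+\beta)} \asymp (\log n\tau_n^2)^{-(q+\beta)}$ and $i_n^{-(1+2\alpha+2q)}\asymp (\log n\tau_n^2)^{-(1/2+\alpha+q)}$ produces the general rate~\eqref{RateL1}, and the exponential-prior case yields~\eqref{RateL2} since the bias dominates. Specializations (i) and (ii) follow by direct substitution: when $\tau_n\equiv 1$, $\log(n\tau_n^2)=\log n$ and combining the two power-of-log terms gives the minimum of their exponents; in (ii) the constraint $\tau_n \lesssim (\log n)^{(1/2+\alpha-\beta)/2}$ is precisely what forces the variance term to be dominated by the bias, while $\tau_n \gtrsim n^{-1/2+\delta}$ ensures $\log(n\tau_n^2)\asymp\log n$. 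Uniformity over Sobolev balls is automatic because $\|\mu_0\|_\beta$ enters only as a multiplicative constant in the bias bound. The main technical obstacle is the sharp asymptotic analysis of the supremum and head sums near the cutoff $i_n$, where the polynomial factors coming from $\lambda_i$ and $l_i$ must be balanced against the exponential factor $e^{2i^2\pi^2 T}$ arising from $\kappa_i^{-2}$; these geometric-series-type estimates are expected to be packaged in the auxiliary lemmas of Sec.~6.
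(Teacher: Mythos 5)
Your proposal is correct and follows essentially the same route as the paper: Chebyshev's inequality reduces the problem to bounding the squared bias $|\E_{\mu_0}\widehat{L\mu}-L\mu_0|^2$, the frequentist variance $t_n^2$ of the posterior mean, and the posterior spread $s_n^2$ by a multiple of $\eps_n^2$, followed by Cauchy--Schwarz on the bias and a splitting of the resulting series at the critical index $i_n\asymp\sqrt{\log n\tau_n^2}$; the paper packages these estimates in Lemmas~\ref{Norm} and~\ref{Series}. Two small departures in your write-up are worth flagging but neither changes the substance: you apply Cauchy--Schwarz with weights $i^{\pm q}$ rather than $i^{\pm\beta}$ as in the paper (equivalent up to relabeling which norm absorbs what), and you observe $V_n\leq W_n$ directly from $n\lambda_i\kappa_i^2/(1+n\lambda_i\kappa_i^2)\leq 1$, which is a nice shortcut avoiding a separate asymptotic estimate for $t_n^2$ (the paper needs the sharp order of $t_n^2$ later, for the credibility theorem, which is why it computes it here).
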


The minimax rate over a ball in the Sobolev space $S^\beta$ is known to be bounded above by $(\log n)^{-(\beta+q)/2}$ (for the case of $q = -1/2$ see \citealp{GoldDeconv}, and for general $q$ in a closely related model see \citealp{Butucea}). In view of Theorem~\ref{RateFullS}, it is not surprising that exponential priors yield this optimal rate. In case of polynomial prior this rate is attained without scaling if and only if the prior smoothness $\alpha$ is greater than or equal to $\beta$ minus 1/2. Here we observe a similar phenomenon as in \cite{KVZ}, where the `loss' in smoothness by $1/2$ is discussed. The regularity of the parameter in the Sobolev scale is not the appropriate type of regularity to consider for estimating a linear functional $L\mu$.
If the polynomial prior is too rough, then the minimax rate may still be attained by scaling the prior. The upper bound on the scaling is the same as in the global case (see Theorem~\ref{RateFullS}.(ii)) \emph{after} decreasing $\beta$ by 1/2. So the `loss in regularity' persists in the scaling.

Because the posterior distribution for the linear functional $L\mu$ is the one-dimensional normal distribution $N(\widehat{L\mu}, s_n^2)$,
where $s_n^2$ is the posterior variance in (\ref{PosteriorMargin}),
the natural \emph{credible interval} for $L\mu$ has endpoints $\widehat{L\mu} \pm z_{\gamma/2}s_n$, for $z_\gamma$ the (lower) standard normal $\gamma$-quantile. The \emph{coverage} of this interval is
\[
\p_{\mu_0}\bigl(\widehat{L\mu} + z_{\gamma/2}s_n \leq L\mu_0 \leq \widehat{L\mu} - z_{\gamma/2}s_n\bigr),
\]
where $Y$ follows \eqref{Model} with $\mu = \mu_0$. In the following theorem we restrict $(l_i)$ to sequences that behave polynomially.

\begin{theorem}\label{CredibilityLin}
Suppose the true parameter $\mu_0$ belongs to $S^\beta$ for $\beta > 0$. Let $\tilde\tau_n = (\log n)^{(1/2+\alpha-\beta)/2}$.

If  $\lambda_i = \tau_n^2i^{-1-2\alpha}$ for some $\alpha > 0$ and $\tau_n > 0$ such that $n\tau_n^2 \to \infty$, and
$|l_i| \asymp i^{-q-1/2}$, then the asymptotic coverage of the interval $\widehat{L\mu} \pm z_{\gamma/2}s_n$ is:
\begin{itemize}
\item[(i)] 1, uniformly in $\mu_0$ such that $\|\mu_0\|_\beta\leq 1$ if $\tau_n \gg \tilde\tau_n$,
\item[(ii)] 1, uniformly in $\mu_0$ with $\|\mu_0\|_\beta \leq r$ for $r$ small enough, if $\tau_n \asymp \tilde\tau_n$;\\
1, for every fixed $\mu_0 \in S^\beta$, if $\tau_n \asymp \tilde\tau_n$,
\item[(iii)] 0, along some $\mu_0^n$ with $\sup_n \bigl\|\mu_0^n\bigr\|_\beta < \infty$, if $\tau_n \lesssim \tilde\tau_n$.
\end{itemize}

If $\lambda_i = e^{-\alpha i^2}$ for some $\alpha > 0$, then the  asymptotic coverage of the interval $\widehat{L\mu} \pm z_{\gamma/2}s_n$ is:
\begin{itemize}
\item[(iv)] 0, for every $\mu_0$ such that $\mu_{0,i}l_i\gtrsim e^{-ci^2/2}i^{-q-1/2}$ for some $c < \alpha$.
\end{itemize}
In case (iii) the sequence $\mu_0^n$ can be taken a fixed element $\mu_0$ in $S^\beta$ if $\tau_n \leq \tilde\tau_n (\log n)^{-\delta}$ for some $\delta > 0$. Furthermore, if $\tau_n\equiv 1$, then the cases (i), (ii) and (iii) arise if $\alpha < \beta-1/2$, $\alpha = \beta-1/2$ and $\alpha \geq \beta-1/2$, respectively. If $\alpha > \beta-1/2$ in case (iii) the sequence $\mu_0^n$ can then be chosen fixed.
\end{theorem}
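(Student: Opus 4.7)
First, I would rewrite the coverage probability in a transparent form. Under $\p_{\mu_0}$ the estimator $\widehat{L\mu}$ is Gaussian with $\widehat{L\mu}-L\mu_0 = B_n + \sqrt{V_n}\,Z$, $Z\sim N(0,1)$, where
\[
B_n = -\sum_i \frac{l_i\mu_{0,i}}{1+n\lambda_i\kappa_i^2}, \qquad V_n = \sum_i \frac{n l_i^2\lambda_i^2\kappa_i^2}{(1+n\lambda_i\kappa_i^2)^2}.
\]
Thus the coverage of $\widehat{L\mu}\pm z_{\gamma/2}s_n$ equals $\Phi(z_{\gamma/2}A_n - b_n)-\Phi(-z_{\gamma/2}A_n - b_n)$, with $A_n=s_n/\sqrt{V_n}$ and $b_n=B_n/\sqrt{V_n}$. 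Provided $A_n\to\infty$, this tends to $1$ iff $\limsup|B_n|/s_n<z_{\gamma/2}$ and to $0$ iff $\liminf|B_n|/s_n>z_{\gamma/2}$, so the whole theorem reduces to locating $|B_n|/s_n$ and verifying $A_n\to\infty$ in each regime.

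Next I compute these orders for the polynomial prior by splitting every sum at the cut-off $N=N_n$ determined by $n\lambda_N\kappa_N^2\asymp 1$, i.e.\ $N\asymp\sqrt{\log(n\tau_n^2)}$. For $i\le N$ one has $(1+n\lambda_i\kappa_i^2)^{-1}\asymp(n\lambda_i\kappa_i^2)^{-1}$, and because the summand carries the exponential $\kappa_i^2=e^{-2i^2\pi^2 T}$, the head of every sum is dominated by its terminal term; the $i>N$ tail is a polynomial series. A direct count then yields
\[
s_n^2 \asymp \tau_n^2(\log n\tau_n^2)^{-(1/2+\alpha+q)}, \qquad V_n \asymp \tau_n^2(\log n\tau_n^2)^{-(1+\alpha+q)},
\]
the former being dominated by the polynomial tail and the latter by the tip at $i\approx N$, so that $A_n^2\asymp N\to\infty$. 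A Cauchy--Schwarz estimate with the Sobolev weight $i^{2\beta}$ gives the uniform bound
\[
|B_n| \leq \|\mu_0\|_\beta\cdot O\bigl((\log n)^{-(q+\beta)/2}\bigr), \qquad \text{hence}\qquad \frac{|B_n|}{s_n} \leq C\|\mu_0\|_\beta\,\frac{\tilde\tau_n}{\tau_n}.
\]

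The three cases for the polynomial prior then fall out. Case (i) is immediate, since $\tilde\tau_n/\tau_n\to 0$ drives $|B_n|/s_n\to 0$ uniformly in $\|\mu_0\|_\beta\le 1$. In case (ii), $\tilde\tau_n/\tau_n$ is bounded, so shrinking $r$ gives $|B_n|/s_n$ below $z_{\gamma/2}$; and for a fixed $\mu_0\in S^\beta$ the Cauchy--Schwarz bound on the tail of $B_n$ sharpens to $o((\log n)^{-(q+\beta)/2})$ using $\sum_{i>M}\mu_{0,i}^2 i^{2\beta}\to 0$, so once more $|B_n|/s_n\to 0$. For case (iii) I construct $\mu_0^n$ supported on $\{N_n,\dots,2N_n\}$ with $\mu_{0,i}^n = R\,\mathrm{sign}(l_i)\,i^{-\beta-1/2}$; this obeys $\|\mu_0^n\|_\beta\asymp R$ and saturates the Cauchy--Schwarz bound, giving $|B_n|/s_n\asymp R\,\tilde\tau_n/\tau_n$, which is pushed above $z_{\gamma/2}$ by taking $R$ large. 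The fixed-$\mu_0$ version of (iii) uses $\mu_{0,i}\asymp \mathrm{sign}(l_i)\,i^{-\beta-1/2}(\log(i+1))^{-\sigma}$ with $\sigma>1/2$, which lies in $S^\beta$ and under the stronger hypothesis $\tau_n\le\tilde\tau_n(\log n)^{-\delta}$ still gives $|B_n|/s_n\gtrsim(\log n)^\delta(\log\log n)^{-\sigma}\to\infty$. The exponential-prior case (iv) runs the same way with $N^2\asymp\log n/(2\pi^2 T+\alpha)$; the assumption $|\mu_{0,i}|\gtrsim e^{-ci^2/2}$ with $c<\alpha$ makes the single tip contribution to $B_n$ dominate $s_n$ in exponential order, so $|B_n|/s_n\to\infty$. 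The main technical difficulty in all of this is the delicate bookkeeping at $i\approx N$ that leads to $V_n\ll s_n^2$, which is precisely the feature that distinguishes the severely ill-posed regime treated here from the mildly ill-posed case in \cite{KVZ}.
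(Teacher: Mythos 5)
Your decomposition and the computation of the orders of the bias $B_n$, the posterior root spread $s_n$, and the frequentist standard deviation $\sqrt{V_n}=t_n$ agree with the paper's, and you correctly identify the key feature of the severely ill-posed case that makes the proof go: $t_n\ll s_n$, i.e.\ $A_n^2\asymp I_N\asymp\sqrt{\log(n\tau_n^2)}\to\infty$ for the polynomial prior (with $s_n^2$ driven by the tail $i>I_N$ and $V_n$ by the cut-off). The Cauchy--Schwarz bound giving $|B_n|/s_n\lesssim\|\mu_0\|_\beta\,\tilde\tau_n/\tau_n$ is exactly the paper's \eqref{SupBias} together with Lemma~\ref{Series}; cases (i)--(ii) then follow as you say, and the dominated-convergence refinement at a fixed $\mu_0$ (paper's Lemma~\ref{CSbound}) is the same idea you invoke via $\sum_{i>M}\mu_{0,i}^2 i^{2\beta}\to0$.

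Where your approach genuinely differs is in case~(iii): you build an explicit sequence $\mu_{0,i}^n=R\,\mathrm{sign}(l_i)\,i^{-\beta-1/2}$ supported on $\{N_n,\dots,2N_n\}$ and, for the fixed-$\mu_0$ variant, take $\mu_{0,i}\asymp\mathrm{sign}(l_i)\,i^{-\beta-1/2}(\log(i+1))^{-\sigma}$ with $\sigma>1/2$. The paper instead argues implicitly via a sequence ``(nearly) attaining the supremum'' $B_n$ and, for the fixed element, uses $\mu_{0,i}=i^{-\beta-1/2-\delta'}$. Both routes work; your explicit construction makes the lower bound on the bias and the Sobolev norm bound transparent, while the paper's polynomial perturbation avoids the $\log\log$ factor, and both require the sign coordination with $l_i$ which you make explicit.

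Two small cautions. First, your reduction ``coverage $\to 1$ iff $\limsup|B_n|/s_n<z_{\gamma/2}$ and $\to 0$ iff $\liminf|B_n|/s_n>z_{\gamma/2}$, provided $A_n\to\infty$'' is stated too strongly (these are sufficient, not necessary, conditions), and the proviso $A_n\to\infty$ fails precisely in case~(iv): for the exponential prior one has $s_n^2\asymp t_n^2\asymp n^{-\alpha/(\alpha+2\pi^2T)}(\log n)^{-1/2-q}$, so $A_n$ is merely bounded. The conclusion $\text{coverage}\to0$ still holds because $|B_n|/s_n\to\infty$ together with $t_n\lesssim s_n$ forces the interval to miss $L\mu_0$ eventually, but this step needs to be argued without the $A_n\to\infty$ crutch. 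Second, you quote the hypothesis of~(iv) as $|\mu_{0,i}|\gtrsim e^{-ci^2/2}$, but the theorem's condition is on the product, $\mu_{0,i}l_i\gtrsim e^{-ci^2/2}i^{-q-1/2}$, which already builds in the sign coordination your lower bound on $|B_n|$ requires. Finally you do not spell out the $\tau_n\equiv1$ sub-cases, though they do fall out of your $\tilde\tau_n/\tau_n$ bookkeeping by reading off the sign of $1/2+\alpha-\beta$.
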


Similarly as in the problem of full recovery of the parameter $\mu$ oversmoothing leads to coverage 0, while undersmoothing gives (extremely) conservative intervals. In the case of a polynomial prior without scaling the cut-off for under- or oversmoothing is at $\alpha = \beta-1/2$, while the cut-off for scaling is at the optimal rate $\tilde\tau_n$. Exponential priors are bad even for very smooth $\mu_0$, and the asymptotic coverage in this case is always 0. It should be noted that too much undersmoothing is also undesirable, as it leads to very wide credible intervals, and may cause that $\sum_{i=1}^\infty  l_i^2\lambda_i$ is no longer finite.

In contrast with the analogous theorem in \cite{KVZ}, the conservativeness in case of undersmoothing is extreme, as the coverage is 1. Since it holds for every linear functional that can be considered in this setting, we do not have a Bernstein--von Mises theorem. The linear functionals considered in this section are not smooth enough to cancel the ill-posedness of the problem \citep[cf. discussion after Theorem 5.4 in][]{KVZ}.

\section{Simulation example}

\noindent To illustrate our results with simulated data we fix a time $T = 0.1$ and a true function $\mu_0$, which we expand as $\mu_0 = \sum_{i=1}^\infty  \mu_{0,i}e_i$ in the basis $(e_i)$. The simulated data are the noisy and transformed coefficients
\[
Y_i = \kappa_i\mu_{0,i}+\frac{1}{\sqrt{n}}Z_i.
\]
The (marginal) posterior distribution for the function $\mu$ at a point $x$ is obtained by expanding $\mu(x) = \sum_{i=1}^\infty  \mu_i e_i(x)$, and applying the framework of linear functionals $L\mu = \sum_{i=1}^\infty  l_i\mu_i$ with $l_i = e_i(x)$ (so $l_i \lesssim 1$ and $q = -1/2$). Recall
\[
\mu(x)\mid Y \sim N\biggl(\sum_{i=1}^\infty  \frac{n\lambda_i\kappa_ie_i(x)}{1+n\lambda_i\kappa_i^2}Y_i, \sum_{i=1}^\infty  \frac{e_i(x)^2\lambda_i}{1+n\lambda_i\kappa_i^2}\biggr).
\]
We obtained (marginal) posterior pointwise credible bands by computing for every $x$ a central 95\% interval for the normal distribution on the right side of the above display. We considered both types of priors.

Figure~1 illustrates these bands for $n=10^4$ and the polynomial prior. In every of 10 panels in the figure the black curve represents the function $\mu_0$, defined by
\begin{equation}
\label{True}
\mu_0(x) = 4x(x-1)(8x-5), \qquad \mu_{0,i} =\frac{8\sqrt{2}(13+11(-1)^i)}{\pi^3i^3},
\end{equation}
where $\mu_{0,i}$ are the coefficients relative to $e_i$, thus $\mu_0 \in S^\beta$ for every $\beta < 2.5$. The 10 panels represent 10 independent realizations of the data, yielding 10 different realizations of the posterior mean (the red curves) and the posterior pointwise credible bands (the green curves). In the left five panels the prior is given by $\lambda_i = i^{-1-2\alpha}$ with $\alpha = 1$, whereas in the right panels the prior corresponds to $\alpha = 3$. Each of the 10 panels also shows 20 realizations from the posterior distribution. This is also valid for Figure~2, with the exponential prior, so $\lambda_i = e^{-\alpha i^2}$. In the left panels $\alpha = 1$, and in the right panels $\alpha = 5$.

\begin{figure}
\centerline{\includegraphics[width=9cm]{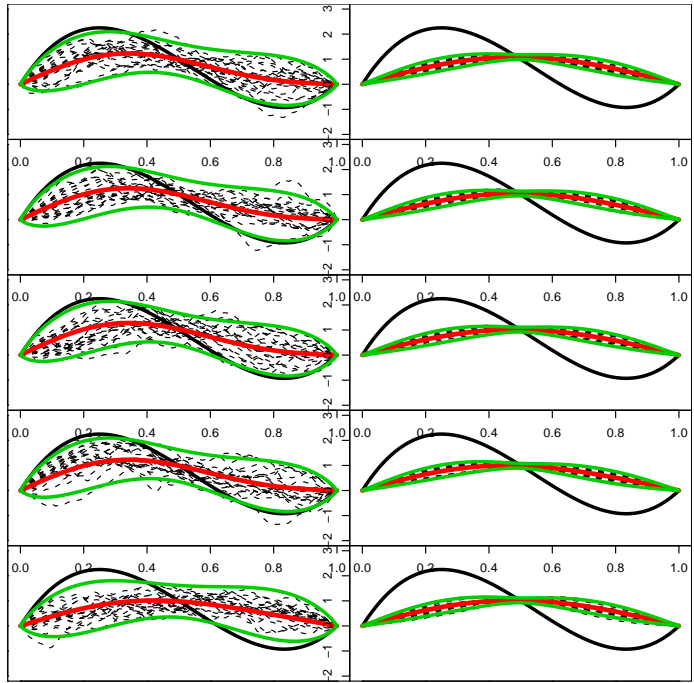}}
\caption{Polynomial prior. Realizations of the posterior mean (red) and (marginal)
posterior credible bands (green), and 20 draws from
the posterior (dashed curves). In all ten panels $n=10^4$.
Left 5 panels: $\alpha=1$; right 5 panels: $\alpha=3$. True curve
(black) given by \eqref{True}.}
\end{figure}

\begin{figure}
\centerline{\includegraphics[width=9cm]{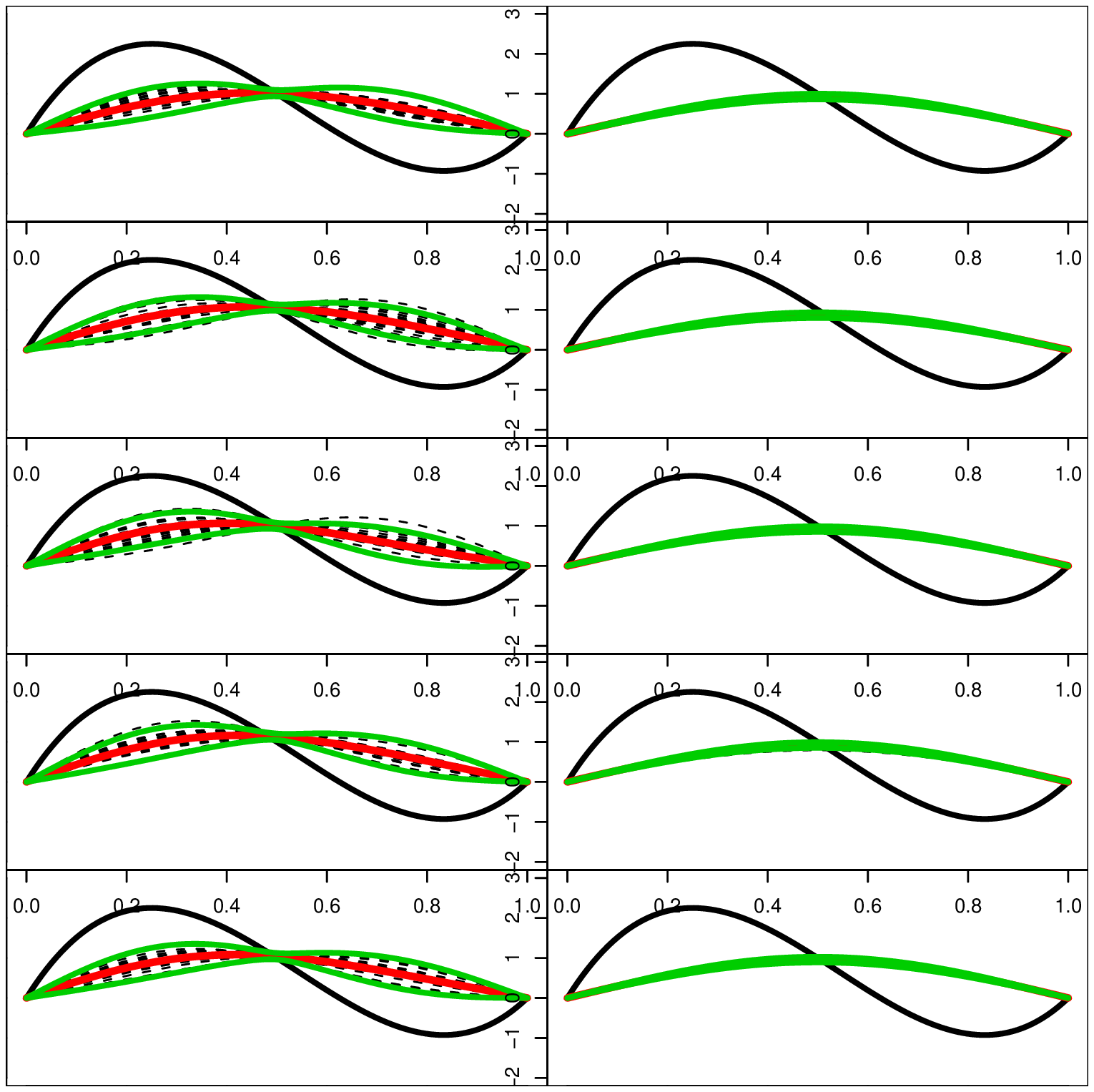}}
\caption{Exponential prior. Realizations of the posterior mean (red) and (marginal)
posterior credible bands (green), and 20 draws from
the posterior (dashed curves). In all ten panels $n=10^4$.
Left 5 panels: $\alpha=1$; right 5 panels: $\alpha=5$. True curve
(black) given by \eqref{True}.}
\end{figure}

\begin{figure}
\centerline{\includegraphics[width=9cm]{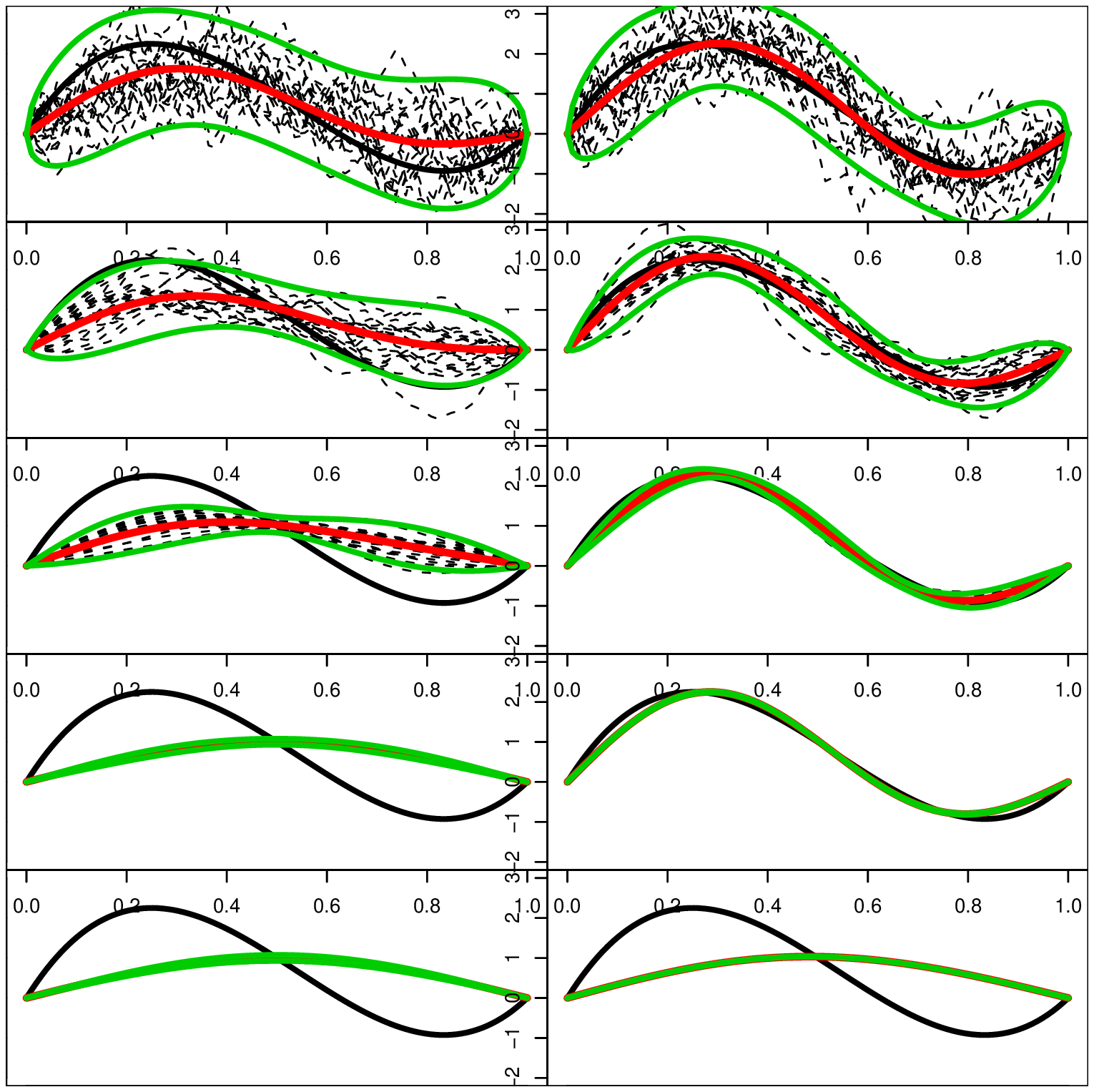}}
\caption{Polynomial prior. Realizations of the posterior mean (red) and (marginal)
posterior credible bands (green), and 20 draws from
the posterior (dashed curves).
Left 5 panels: $n=10^4$ and $\alpha=0.5,1,2,5,10$ (top to bottom);
right 5 panels: $n=10^8$ and $\alpha=0.5,1,2,5,10$ (top to bottom). True curve
(black) given by \eqref{True}.}
\end{figure}

\begin{figure}
\centerline{\includegraphics[width=9cm]{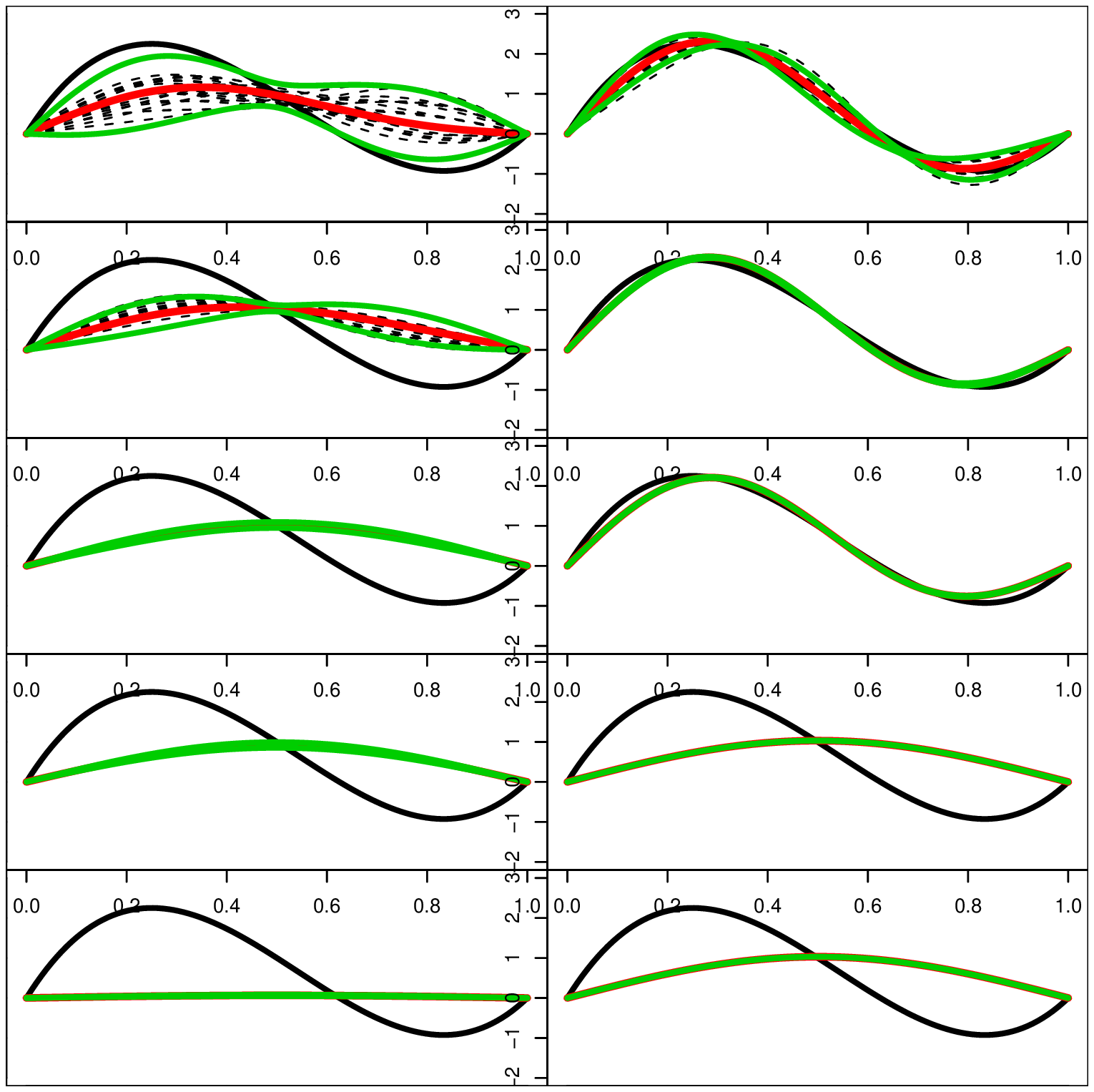}}
\caption{Exponential prior. Realizations of the posterior mean (red) and (marginal)
posterior credible bands (green), and 20 draws from
the posterior (dashed curves).
Left 5 panels: $n=10^4$ and $\alpha=0.5,1,2,5,10$ (top to bottom);
right 5 panels: $n=10^8$ and $\alpha=0.5,1,2,5,10$ (top to bottom). True curve
(black) given by \eqref{True}.}
\end{figure}

A comparison of the left and right panels in Figure~1 shows that the rough polynomial prior ($\alpha = 1$) is aware of the difficulty of inverse problem: it produces wide pointwise credible bands that in (almost) all cases contain nearly the whole true curve. Figure~1 together with Figure~2 show that smooth priors (polynomial with $\alpha = 3$ and both exponential priors) are overconfident: the spread of the posterior distribution poorly reflects the imprecision of estimation. Our theoretical results show that the inaccurate quantification of the estimation error (by the posterior spread) remains even as $n \to \infty$.

The reconstruction, by the posterior mean or any other posterior quantiles, will eventually converge to the true curve.
The specification of the prior influences the speed of this convergence. This is illustrated in Figures~3~and~4. Every of 10 panels in each of the figures is similarly constructed as before, but now with $n=10^4$ and $n=10^8$ for the five panels on the left and right side, respectively, and with $\alpha = 1/2, 1, 2, 5, 10$ for the five panels from top to bottom ($\lambda_i = i^{-1-2\alpha}$ in Figure~3, and $\lambda_i = e^{-\alpha i^2}$ in Figure~4). As discussed above, all exponential priors give the optimal rate, but lead to bad pointwise credible bands. Also smooth polynomial priors give the optimal rate. This can be seen in Figure~3 for $n=10^8$ and $\alpha = 2$ or $5$, where pointwise credible bands are very close to the true curve. However, for $\alpha = 5$ it should be noted that the true curve is mostly outside the pointwise credible band.

\section{Proofs}\label{Proofs}

\subsection{Proof of Theorem~\ref{RateFullS}}

\noindent Let $s_{i,n}$ and $t_{i,n}$ be such that the posterior distribution in \eqref{PosteriorFull} can be denoted by $\bigotimes_{i=1}^\infty  N\bigl(\sqrt{nt_{i,n}} Y_i,s_{i,n}\bigr)$. Because the posterior is Gaussian, it follows that
\begin{equation}\label{NormSquared}
\int\|\mu - \mu_0\|^2\, d\Pi_n(\mu \mid Y) = \|\hat\mu-\mu_0\|^2+\sum_{i=1}^\infty  s_{i,n},
\end{equation}
where $Y$ follows \eqref{Model} with $\mu = \mu_0$, and
\[
\hat\mu = \biggl(\frac{n\lambda_i\kappa_i}{1+n\lambda_i\kappa_i^2}Y_i\biggr)_i = \biggl(\frac{n\lambda_i\kappa_i^2\mu_{0,i}}{1+n\lambda_i\kappa_i^2} + \frac{\sqrt{n}\lambda_i\kappa_iZ_i}{1+n\lambda_i\kappa_i^2}\biggr)_i =: \E_{\mu_0}\hat\mu + \bigl(\sqrt{t_{i,n}}Z_i\bigr)_i.
\]
By Markov's inequality the left side of~\eqref{NormSquared} is an upper bound to $M_n^2\eps_n^2\Pi_n\bigl(\mu:\|\mu -\mu_0\|\geq M_n\eps_n\mid Y)$. Therefore, it suffices to show that the expectation under $\mu_0$ of the right side of the display is bounded by a multiple of $\eps_n^2$. The expectation of the first term is the mean square error of the posterior mean $\hat\mu$, and can be written as the sum $\|\E_{\mu_0}\hat\mu-\mu_0\|^2 + \sum_{i=1}^\infty  t_{i,n}$ of its square bias and `variance'. The second term $\sum_{i=1}^\infty  s_{i,n}$ is deterministic. If $\lambda_i = \tau_n^2i^{-1-2\alpha}$ the three quantities are given by:
\begin{align}
\label{SqBias} \|\E_{\mu_0}\hat\mu-\mu_0\|^2 &= \sum_{i=1}^\infty  \frac{\mu_{0,i}^2}{(1+n\lambda_i\kappa_i^2)^2} = \sum_{i=1}^\infty  \frac{\mu_{0,i}^2}{(1+n\tau_n^2i^{-1-2\alpha}e^{-2\pi^2 T i^2})^2}\\
\label{Var} \sum_{i=1}^\infty  t_{i,n} &= \sum_{i=1}^\infty  \frac{n\lambda_i^2\kappa_i^2}{(1+n\lambda_i\kappa_i^2)^2} = \sum_{i=1}^\infty  \frac{n\tau_n^4i^{-2-4\alpha}e^{-2\pi^2 T i^2}}{(1+n\tau_n^2i^{-1-2\alpha}e^{-2\pi^2 T i^2})^2}\\
\label{PostSpr} \sum_{i=1}^\infty  s_{i,n}&= \sum_{i=1}^\infty  \frac{\lambda_i}{1+n\lambda_i\kappa_i^2} = \sum_{i=1}^\infty  \frac{\tau_n^2i^{-1-2\alpha}}{1+n\tau_n^2i^{-1-2\alpha}e^{-2\pi^2 T i^2}}.
\end{align}

By Lemma~\ref{Norm} (applied with $q=\beta$, $t=0$, $r=0$, $u = 1+2\alpha$, $p = 2\pi^2T$, $v =2$, and $N = n\tau_n^2$) the first term can be bounded by $\log(n\tau_n^2)^{-\beta}$, which accounts for the first term in the definition of $\eps_n$ in \eqref{RateF1}. By Lemma~\ref{Series} (applied with $t=2+4\alpha$, $r= 2\pi^2T$, $u = 1+2\alpha$, $p=2\pi^2T$, $v=2$, and $N=n\tau_n^2$) the second expression is of the order $\tau_n^2(\log n\tau_n^2)^{-1/2-\alpha}$. The third expression is of the order the square of the second term in the definition of $\eps_n$ in \eqref{RateF1}, by Lemma~\ref{Series} (applied with $t=1+2\alpha$, $r= 0$, $u = 1+2\alpha$, $p=2\pi^2T$, $v=1$, and $N=n\tau_n^2$).

The consequences (i)--(ii) follow by verification after substitution of $\tau_n$ as given.

In case of $\lambda_i =e^{-\alpha i^2}$, we replace $i^{-1-2\alpha}$ by $e^{-\alpha i^2}$ and set $\tau_n\equiv 1$ in \eqref{SqBias}--\eqref{PostSpr}. We then apply Lemma~\ref{Norm} (with $q=\beta$, $t=0$, $r=0$, $u = 0$, $p = 2\pi^2T+\alpha$, $v =2$, and $N = n$) and see that the first term can be bounded by $(\log n)^{-\beta}$, which accounts for the first term in the definition of $\eps_n$ in \eqref{RateF2}. By Lemma~\ref{Series} (applied with $t=0$, $r= 2\alpha+2\pi^2T$, $u = 0$, $p=2\pi^2\alpha$, $v=2$, and $N=n$), and again Lemma~\ref{Series} (applied with $t=0$, $r= \alpha$, $u = 0$, $p=\alpha+2\pi^2T$, $v=1$, and $N=n$) the latter two are of the order $n^{-\alpha/(\alpha+2\pi^2T)}$.

\subsection{Proof of Theorem~\ref{CredibilityFull}}

\noindent Because the posterior distribution is $\bigotimes_{i=1}^\infty  N(\sqrt{n t_{i,n}}Y_i,s_{i,n})$, by~\eqref{PosteriorFull}, the radius $r_{n,\gamma}$ in \eqref{Credibility} satisfies $\p(U_n < r_{n,\gamma}^2) = 1-\gamma$, for $U_n$ a random variable distributed as the square norm of an $\bigotimes_{i=1}^\infty  N(0, s_{i,n})$-variable. Under \eqref{Model} the variable $\hat\mu$ is $\bigotimes_{i=1}^\infty  N\bigl((\E_{\mu_0}\hat\mu)_i, t_{i,n}\bigr)$-distributed, and thus the coverage \eqref{AsymptoticCov} can be written as
\begin{equation}\label{eq: cov}
\p\bigl(\|W_n+\E_{\mu_0}\hat\mu -\mu_0\|\leq r_{n,\gamma}\bigr),
\end{equation}
for $W_n$ possessing a $\bigotimes_{i=1}^\infty  N(0, t_{i,n})$-distribution. For ease of notation let $V_n = \|W_n\|^2$.

The variables $U_n$ and $V_n$ can be represented as $U_n = \sum_{i=1}^\infty  s_{i,n}Z_i^2$ and $V_n = \sum_{i=1}^\infty  t_{i,n}Z_i^2$, for $Z_1, Z_2, \ldots$ independent standard normal variables, and $s_{i,n}$ and $t_{i,n}$ are as in the proof of Theorem~\ref{RateFullS}. By Lemma~\ref{Series} (cf. previous subsection)
\begin{align*}
\E U_n &= \sum_{i=1}^\infty  s_{i,n} \asymp \tau_n^2(\log n\tau_n^2)^{-\alpha} &\quad&
\sd U_n = \sqrt{2\sum_{i=1}^\infty  s_{i,n}^2} \asymp \tau_n^2(\log n\tau_n^2 )^{-1/4-\alpha}\\
\E V_n &= \sum_{i=1}^\infty  t_{i,n} \asymp \tau_n^2(\log n\tau_n^2)^{-1/2-\alpha} &\quad&
\sd V_n =\sqrt{2\sum_{i=1}^\infty  t_{i,n}^2} \asymp \tau_n^2(\log n\tau_n^2)^{-1/2-\alpha}.
\end{align*}
It follows that
\[
r_{n,\gamma}^2 \asymp \tau_n^2(\log n\tau_n^2)^{-\alpha} \asymp \E U_n \gg \E V_n \asymp \sd V_n,
\]
and therefore
\begin{equation}\label{Partial}
\p\bigl(V_n \leq \delta r_{n,\gamma}^2\bigr)= \p\biggl(\frac{V_n-\E V_n}{\sd V_n}\leq \frac{\delta r_{n,\gamma}^2-\E V_n}{\sd V_n}\biggr) \to 1,
\end{equation}
for every $\delta > 0$.
The square norm of the bias $\E_{\mu_0}\hat\mu - \mu_0$ is given in~\eqref{SqBias}, where it was noted that
\[
B_n := \sup_{\|\mu_0\|_\beta \lesssim 1} \|\E_{\mu_0}\hat\mu - \mu_0\| \asymp (\log n\tau_n^2)^{-\beta/2}.
\]
The bias $B_n$ is decreasing in $\tau_n$, whereas $\E U_n$ is increasing. The scaling rate $\tilde\tau_n \asymp (\log n)^{(\alpha - \beta)/2}$ balances the square bias $B_n^2$ with the posterior spread $\E U_n$, and hence with $r_{n,\gamma}^2$.

Case (i). In this case $B_n \ll r_{n,\gamma}$. Hence $\p\bigl(\|W_n+\E_{\mu_0}\hat\mu -\mu_0\|\leq r_{n,\gamma}\bigr) \geq \p\bigl(\|W_n\|\leq r_{n,\gamma}-B_n\bigr) = \p\bigl(V_n\leq r_{n,\gamma}^2(1+o(1))\bigr) \to 1$, uniformly in the set of $\mu_0$ in the supremum defining $B_n$. Note that $\tilde r_{n,\gamma}$ is such that the coverage in \eqref{eq: cov} is exactly $1-\gamma$. Since $\|W_n\|^2 = V_n$, we have that $\tilde r_{n,\gamma}^2$ is of the order $B_n^2 + \tau_n^2(\log n\tau_n^2)^{-1/2-\alpha}$, so of strictly smaller order than $r_{n,\gamma}^2$, and therefore $r_{n,\gamma}/\tilde r_{n,\gamma} \to \infty$.

Case (ii). In this case $B_n \asymp r_{n,\gamma}$. By the second assertion of Lemma~\ref{Series} the bias $\|\E_{\mu_0}\hat\mu - \mu_0\|$ at a fixed $\mu_0$ is of strictly smaller order than the supremum $B_n$. The argument of (i) shows that the asymptotic coverage then tends to 1. The maximal bias $B_n(r)$ over $\|\mu_0\|_\beta \leq r$ is of the order $r_{n,\gamma}$ and proportional to the radius $r$. Thus for small enough $r$ we have that $r_{n,\gamma}-B_n(r) \gtrsim r_{n,\gamma} \to \infty$. Then $\p\bigl(\|W_n +\E_{\mu_0}\hat\mu-\mu_0\| \leq r_{n,\gamma}\bigr)\geq \p\bigl(\|W_n\|\leq r_{n,\gamma}-B_n(r)\bigl) \geq \p\bigl(V_n \lesssim r_{n,\gamma}^2\bigr)\to 1$.

Case (iii). In this case $B_n \gtrsim r_{n,\gamma}$. Hence any sequence $\mu_0^n$ that (nearly) attains the maximal bias over a sufficiently large ball $\|\mu_0\|_\beta\leq r$ such that $B_n(r)-r_{n,\gamma}\gtrsim r_{n,\gamma}$ satisfies $\p\bigl(\|W_n+\E_{\mu_0}\hat\mu -\mu_0\|\leq r_{n,\gamma}\bigr) \leq \p\bigl(\|W_n\|\geq B_n(r) - r_{n,\gamma}\bigr) \leq \p\bigl(V_n\gtrsim r_{n,\gamma}^2 \bigr)\to 0$.

If $\tau_n \equiv 1$, then $B_n$ and $r_{n,\gamma}$ are both powers of $1/\log n$ and hence $B_n \gg r_{n,\gamma}$ implies that $B_n \gtrsim r_{n,\gamma}(\log n)^\delta$, for some $\delta > 0$. The preceding argument then applies for a fixed $\mu_0$ of the form $\mu_{0,i} \asymp i^{-1/2-\beta-\eps}$, for small $\eps>0$, that gives a bias that is much closer than $(\log n)^\delta$ to $B_n$.

Case (iv). In the proof of Theorem~\ref{RateFullS}, we obtained $\E U_n \asymp \E V_n \asymp n^{-\alpha/(\alpha+2\pi^2T)}$. It can be shown that $\sd U_n \asymp n^{-\alpha/(\alpha+2\pi^2T)}$, so also $r_{n,\gamma}^2 \asymp n^{-\alpha/(\alpha+2\pi^2T)}$. If $|\mu_{0,i}| \gtrsim e^{-ci^2/2}$ for some $c < \alpha$, we have
\[
\|\E_{\mu_0}\hat\mu-\mu_0\|^2 = \sum_{i=1}^\infty  \frac{\mu_{0,i}^2}{(1+n\lambda_i\kappa_i^2)^2} \gtrsim \sum_{i=1}^\infty  \frac{e^{-ci^2}}{(1+ne^{-(\alpha+2\pi^2 T) i^2})^2} \asymp n^{-c/(\alpha+2\pi^2T)} \gg n^{-\alpha/(\alpha+2\pi^2T)},
\]
by Lemma~\ref{Series} (applied with $t=0$, $r = c$, $u=0$, $p=\alpha+2\pi^2 T$, $v=2$, and $N=n$). Hence $\p\bigl(\|W_n + \E_{\mu_0}\hat\mu - \mu_0\| \leq r_{n,\gamma}\bigr) \leq \p\bigl(V_n \geq \|\E_{\mu_0}\hat\mu-\mu_0\|^2 - r_{n,\gamma}^2\bigr) \to 0$.

\subsection{Proof of Theorem~\ref{RateFunctS}}

\noindent By \eqref{PosteriorMargin} the posterior distribution is $N(\widehat{L\mu}, s_n^2)$, and hence similarly as in the proof of Theorem~\ref{RateFullS} it suffices to show that
\[
\E_{\mu_0} |\widehat{L\mu} - L\mu_0|^2 + s_n^2 = |\E_{\mu_0}\widehat{L\mu} -L\mu_0|^2 + \sum_{i=1}^\infty  \frac{l_i^2n\lambda_i^2\kappa_i^2}{(1+n\lambda_i\kappa_i^2)^2} + s_n^2
\]
is bounded above by a multiple of $\eps_n^2$. If $\lambda_i = \tau_n^2i^{-1-2\alpha}$ the three quantities are given by
\begin{align}
\label{LinBias} |\E_{\mu_0}\widehat{L\mu}-L\mu_0| = \biggl|\sum_{i=1}^\infty  \frac{l_i\mu_{0,i}}{1+n\lambda_i\kappa_i^2}\biggr| &\leq \sum_{i=1}^\infty  \frac{|l_i\mu_{0,i}|}{1+n\tau_n^2i^{-1-2\alpha}e^{-2\pi^2 T i^2}}\\
\label{LinVar} t_n^2:= \sum_{i=1}^\infty  \frac{l_i^2n\lambda_i^2\kappa_i^2}{(1+n\lambda_i\kappa_i^2)^2} &= n\tau_n^4\sum_{i=1}^\infty  \frac{l_i^2i^{-2-4\alpha}e^{-2\pi^2 T i^2}}{(1+n\tau_n^2i^{-1-2\alpha}e^{-2\pi^2 T i^2})^2}\\
\label{LinPostSpr} s_n^2 = \sum_{i=1}^\infty  \frac{l_i^2\lambda_i}{1+n\lambda_i\kappa_i^2} &= \tau_n^2\sum_{i=1}^\infty \frac{l_i^2i^{-1-2\alpha}}{1+n\tau_n^2i^{-1-2\alpha}e^{-2\pi^2 T i^2}}.
\end{align}
By the Cauchy--Schwarz inequality the square of the bias~\eqref{LinBias} satisfies
\begin{equation}\label{LinSqBias}
|\E_{\mu_0}\widehat{L\mu}-L\mu_0|^2 \leq \|\mu_0\|^2_\beta \sum_{i=1}^\infty \frac{l_i^2i^{-2\beta}}{(1+n\tau_n^2i^{-1-2\alpha}e^{-2\pi^2 T i^2})^2}.
\end{equation}
Consider $(l_i) \in S^q$. By Lemma~\ref{Norm} (applied with $q=q$, $t=2\beta$, $r=0$, $u=1+2\alpha$, $p = 2\pi^2T$, $v=2$, and $N = n\tau_n^2$) the right side of this display can be further bounded by $\|\mu_0\|^2_\beta\|l\|_q^2$ times the square of the first term in the sum of two terms that defines $\eps_n$. By Lemma~\ref{Norm} (applied with $q=q$, $t=2+4\alpha$, $r=2\pi^2T$, $u=1+2\alpha$, $p=2\pi^2T$, $v=2$, and $N = n\tau_n^2$), and again by Lemma~\ref{Norm} (applied with $q = q$, $t = 1+2\alpha$, $r= 0$, $u=1+2\alpha$, $p = 2\pi^2T$, $v=1$, and $N=n\tau_n^2$) the right sides of \eqref{LinVar} and \eqref{LinPostSpr} are bounded above by $\|l\|_q^2$ times the square of the second term in the definition of $\eps_n$.

Consider $l_i \lesssim i^{-q-1/2}$. This follows the same lines as in the case of $(l_i) \in S^q$, except that we use Lemma~\ref{Series} instead of Lemma~\ref{Norm}. In this case the upper bound for the standard deviation of the posterior mean $t_n$ is of the order $\tau_n(\log n\tau_n^2 )^{-(1+\alpha+q)/2}$.

Consequences (i)--(ii) follow by substitution.

If $\lambda_i = e^{-\alpha i^2}$, then in case $(l_i)\in S^q$ we use Lemma~\ref{Series} (with $q=q$, $t=2\beta$, $r=0$, $u=0$, $p=\alpha + 2\pi^2 T$, $v=2$, and $N = n$), and Lemma~\ref{Series} (with $q=q$, $t=0$, $r=2\alpha+2\pi^2T$, $u=0$, $p=\alpha + 2\pi^2 T$, $v=2$, and $N = n$), and again Lemma~\ref{Series} (with $q=q$, $t=0$, $r=\alpha$, $u=0$, $p=\alpha + 2\pi^2 T$, $v=2$, and $N = n$) to bound \eqref{LinSqBias} by a multiple of $(\log n)^{-(\beta+q)}$, and \eqref{LinVar}--\eqref{LinPostSpr} by a multiple of $n^{-\alpha/(\alpha+2\pi^2 T)}(\log n)^{-q}$.

If $l_i \lesssim i^{-q-1/2}$, we use Lemma~\ref{Norm} (with $t=1+2q+2\beta$, $r = 0$, $u=0$, $p=\alpha+2\pi^2 T$, $v=2$, and $N = n$), and Lemma~\ref{Norm} (with $t=1+2q$, $r = 2\alpha+2\pi^2T$, $u=0$, $p=\alpha+2\pi^2 T$, $v=2$, and $N = n$), and again Lemma~\ref{Norm} (with $t=1+2q$, $r = \alpha$, $u=0$, $p=\alpha+2\pi^2 T$, $v=1$, and $N = n$) to bound \eqref{LinSqBias} by a multiple of $(\log n)^{-(\beta+q)}$, and \eqref{LinVar}--\eqref{LinPostSpr} by a multiple of $n^{-\alpha/(\alpha+2\pi^2 T)}(\log n)^{-1/2-q}$.

\subsection{Proof of Theorem~\ref{CredibilityLin}}

\noindent Under \eqref{Model} the variable $\widehat{L\mu}$ is $N(\E_{\mu_0}\widehat{L\mu}, t_n^2)$-distributed, for $t_n^2$ given in~\eqref{LinVar}. It follows that the coverage can be written, with $W$ a standard normal variable,
\begin{equation}\label{CovSplit}
\p\bigl(|Wt_n+\E_{\mu_0}\widehat{L\mu}-L\mu_0|\leq -s_nz_{\gamma/2}\bigr).
\end{equation}
The bias $|\E_{\mu_0}\widehat{L\mu} - L\mu_0|$ and posterior spread $s_n^2$ are expressed as series in \eqref{LinBias} and \eqref{LinPostSpr}.

Because $W$ is centered, the coverage \eqref{CovSplit} is largest if the bias $\E_{\mu_0}\widehat{L\mu}-L\mu_0$ is zero. It is then at least $1-\gamma$, because $t_n \leq s_n$, and tends to exactly $1$, because $t_n \ll s_n$.

The supremum of the bias satisfies
\begin{equation}\label{SupBias}
B_n := \sup_{\|\mu_0\|_\beta\lesssim 1} |\E_{\mu_0}\widehat{L\mu}-L\mu_0| \asymp (\log n\tau_n^2)^{-(\beta+q)/2}.
\end{equation}

The maximal bias $B_n$ is a decreasing function of the scaling parameter $\tau_n$, while the root spread $s_n$ increases with $\tau_n$. The scaling rate $\tilde\tau_n = (\log n)^{(1/2+\alpha-\beta)/2}$ in the statement of the theorem balances $B_n$ with $s_n$.

Case (i). If $\tau_n \gg \tilde\tau_n$, then $B_n \ll s_n$. Hence the bias $|\E_{\mu_0}\widehat{L\mu}-L\mu_0|$ in \eqref{CovSplit} is negligible relative to $s_n$, uniformly in $\|\mu_0\|_\beta\lesssim 1$, and $\p\bigl(|Wt_n+\E_{\mu_0}\widehat{L\mu}-L\mu_0|\leq -s_nz_{\gamma/2}\bigr)\geq \p\bigl(|Wt_n| \leq -s_nz_{\gamma/2} - |\E_{\mu_0}\widehat{L\mu}-L\mu_0|\bigr)\to 1$.

Case (ii). If $\tau_n \asymp \tilde \tau_n$, then $B_n \asymp s_n$. If $b_n = |\E_{\mu_0^n}\widehat{L\mu} - L\mu_0^n|$ is the bias at a sequence $\mu_0^n$ that nearly assumes the supremum in the definition of $B_n$, we have that $\p\bigl(|Wt_n + db_n|\leq -s_n z_{\gamma/2}\bigr) \geq \p\bigl(|Wt_n|\leq  s_n |z_{\gamma/2}|-db_n\bigr) \to 1$ if $d$ is chosen sufficiently small. This is the coverage at the sequence $d\mu_0^n$, which is bounded in $S^\beta$. On the other hand, using Lemma~\ref{CSbound} it can be seen that the bias at a fixed $\mu_0 \in S^\beta$ is of strictly smaller order than the supremum $B_n$, and hence the coverage at a fixed $\mu_0$ is as in case (i).

Case (iii). If $\tau_n \lesssim \tilde\tau_n$, then $B_n \gtrsim s_n$. If $b_n = |\E_{\mu_0^n}\widehat{L\mu} - L\mu_0^n|$ is again the bias at a sequence $\mu_0^n$ that (nearly) attains the supremum in the definition of $B_n$, we we have that $\p\bigl(|Wt_n + db_n|\leq -s_n z_{\gamma/2}\bigr) \leq \p\bigl(|Wt_n|\geq db_n-s_n|z_{\gamma/2}|\bigr) \to 0$ if $d$ is chosen sufficiently large. This is the coverage at the sequence $d\mu_0^n$, which is bounded in $S^\beta$. By the same argument the coverage also tends to zero for a fixed $\mu_0$ in $S^\beta$ with bias $b_n = |\E_{\mu_0}\widehat{L\mu}-L\mu_0| \gg s_n\gg t_n$. For this we choose $\mu_{0,i}=i^{-\beta-1/2-\delta'}$ for some $\delta' > 0$. By another application of Lemma~\ref{Series}, the bias at $\mu_0$ is of the order
\[
\sum_{i=1}^\infty  \frac{l_i\mu_{0,i}}{1+n\tau_n^2i^{-1-2\alpha}e^{-2\pi^2 T i^2}} \asymp \sum_{i=1}^\infty  \frac{i^{-\beta-q-\delta'-1}}{1+n\tau_n^2i^{-1-2\alpha}e^{-2\pi^2 T i^2}} \asymp (\log n\tau_n^2)^{-(\beta+q+\delta')/2}.
\]
Therefore if $\tau_n \leq \tilde\tau_n (\log n)^{-\delta}$ for some $\delta>0$, then $B_n \gtrsim s_n (\log n\tau_n^2)^{\delta''}$ for some $\delta'' > 0$, and hence taking $\delta' = \delta''$ we have $b_n \asymp B_n (\log (n\tau_n^2))^{-\delta''/2} \gg s_n \gg t_n$.

Case (iv). In the proof of Theorem~\ref{RateFunctS}, we obtained $s_n \asymp t_n \asymp n^{-\alpha/(\alpha+2\pi^2 T)}(\log n)^{-q}$. If $\mu_{0,i}l_i \gtrsim e^{-ci^2/2}i^{-q-1/2}$ for some $c < \alpha$, we have
\begin{align*}
|\E_{\mu_0}\widehat{L\mu}-L\mu_0| &= \biggl|\sum_{i=1}^\infty  \frac{l_i\mu_{0,i}}{1+n\lambda_i\kappa_i^2}\biggr| \gtrsim \sum_{i=1}^\infty  \frac{e^{-ci^2}i^{-2q-1}}{(1+ne^{-(\alpha+2\pi^2 T) i^2})^2}\\
&\asymp n^{-c/(\alpha+2\pi^2T)}(\log n)^{-1/2-q} \gg n^{-\alpha/(\alpha+2\pi^2T)}(\log n)^{-1/2-q},
\end{align*}
by Lemma~\ref{Series} (applied with $t=1+2q$, $r = c$, $u=0$, $p=\alpha+2\pi^2 T$, $v=2$, and $N=n$). Hence $\p\bigl(|Wt_n+\E_{\mu_0}\widehat{L\mu}-L\mu_0|\leq -s_nz_{\gamma/2}\bigr)\leq \p\bigl(|Wt_n| \geq |\E_{\mu_0}\widehat{L\mu}-L\mu_0| -s_nz_{\gamma/2} \bigr)\to 0$.

If the scaling rate is fixed to $\tau_n \equiv 1$, then it can be checked from \eqref{SupBias} and the proof of Theorem~\ref{RateFunctS} that $B_n \ll s_n, B_n \asymp s_n$ and $B_n \gg s_n$ in the three cases $\alpha < \beta-1/2$, $\alpha = \beta-1/2$ and $\alpha \geq \beta-1/2$, respectively. In the first and third cases the maximal bias and the root spread differ by more than a logarithmic term $(\log n)^\delta$. It follows that the preceding analysis (i), (ii), (iii) extends to this situation.

\section{Appendix}\label{Appendix}

\begin{lemma}\label{Norm}
For any $q \in \mathbb{R}$, $u, v \geq 0$, $t\geq -2q$, $p > 0$, and $0 \leq r < vp$, as $N\to \infty$,
\[
\sup_{\|\xi\|_q\leq 1}\sum_{i=1}^\infty  \frac{\xi_i^2i^{-t}e^{-ri^2}}{(1+Ni^{-u}e^{-pi^2})^v} \asymp N^{-r/p}(\log N)^{-t/2-q+ur/(2p)}.
\]
Moreover, for every fixed $\xi \in S^q$, as $N \to \infty$,
\[
N^{r/p}(\log N)^{t/2+q-ur/(2p)}\sum_{i=1}^\infty  \frac{\xi_i^2i^{-t}e^{-ri^2}}{(1+Ni^{-u}e^{-pi^2})^v} \to 0.
\]
\end{lemma}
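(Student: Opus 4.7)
The plan is to reduce the constrained supremum over $\xi$ to an unconstrained supremum over the index $i$, analyze the latter by locating where the summand peaks, and then upgrade the first statement to the pointwise statement via dominated convergence. Writing $a_i = i^{-t}e^{-ri^2}/(1+Ni^{-u}e^{-pi^2})^v$ and $y_i = \xi_i^2 \geq 0$, the quantity $\sup_{\|\xi\|_q \leq 1}\sum \xi_i^2 a_i$ becomes a linear program $\sup\{\sum a_i y_i : y_i \geq 0, \sum i^{2q}y_i \leq 1\}$ whose optimum is $\sup_{i\geq 1} a_i/i^{2q}$, attained by placing all mass on a near-maximizing index. So it suffices to show
\[
\sup_{i\geq 1}\frac{i^{-t-2q}e^{-ri^2}}{(1+Ni^{-u}e^{-pi^2})^v} \asymp f_N := N^{-r/p}(\log N)^{-t/2-q+ur/(2p)}.
\]

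Let $J_N$ solve $NJ_N^{-u}e^{-pJ_N^2} = 1$, the index at which the denominator changes regime. From $\log N = u\log J_N + pJ_N^2$ one obtains iteratively $J_N^2 = (\log N)/p - (u/(2p))\log\log N + O(1)$, so $e^{-rJ_N^2} \asymp N^{-r/p}(\log N)^{ur/(2p)}$ and $J_N^{-t-2q} \asymp (\log N)^{-(t+2q)/2}$, whose product is exactly $f_N$; evaluating the summand at $i \asymp J_N$ yields the lower bound. For the matching upper bound, split at $J_N$: for $i \leq J_N$ bound $(1+Ni^{-u}e^{-pi^2})^v \geq N^v i^{-uv}e^{-vpi^2}$, giving the upper bound $N^{-v} i^{uv-t-2q}e^{(vp-r)i^2}$ which, since $vp > r$, is maximized over this range at $i = J_N$ with value of order $f_N$; for $i > J_N$ bound the denominator by $1$ to obtain $i^{-t-2q}e^{-ri^2}$ which, since $t+2q\geq 0$ and $r \geq 0$, is non-increasing beyond $J_N$ and again equals $f_N$ at $J_N$.

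For the second assertion, set $\phi_N = 1/f_N$. The supremum bound just derived gives $\phi_N a_i/i^{2q} \leq C$ uniformly in $i$ and in $N$ large, hence $\phi_N \xi_i^2 a_i \leq C \xi_i^2 i^{2q}$, a summable majorant since $\xi \in S^q$. For each fixed $i$, the denominator of $a_i$ tends to infinity like $N^v i^{-uv}e^{-vpi^2}$, so $\phi_N a_i$ behaves like $c_i N^{r/p-v}(\log N)^{t/2+q-ur/(2p)}$, which tends to $0$ because $r < vp$; dominated convergence then yields $\phi_N \sum \xi_i^2 a_i \to 0$. The main obstacle is the careful accounting of the logarithmic correction: the exponent $ur/(2p)$ of $\log N$ in $f_N$ arises entirely from the subleading $\log\log N$ term in $J_N^2$, so it is essential to expand $NJ_N^{-u}e^{-pJ_N^2}=1$ to two orders rather than just identifying $J_N^2 \sim (\log N)/p$.
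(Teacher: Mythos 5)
Your reduction of the constrained supremum to $\sup_{i\geq1}a_i/i^{2q}$ via the linear-programming/extreme-point observation is a genuinely different and cleaner route than the paper's, which bounds the weighted sum directly by splitting at $I_N$, pulling out $\|\xi\|_q^2$, and separately exhibiting a near-extremal $\xi$. Your upper-bound analysis (split at $J_N$, bound the denominator differently in the two regimes, maximize the resulting expressions) is correct, and your two-order expansion of $J_N^2$ to extract the $(\log N)^{ur/(2p)}$ factor is a valid alternative to the paper's exact identity $e^{-pI_N^2}=N^{-1}I_N^u$. The dominated-convergence argument for the second assertion is also fine and mirrors the paper's.

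The lower bound, however, has a real gap when $r>0$, and your LP reduction actually exposes a flaw in the lemma as stated. ``Evaluating the summand at $i\asymp J_N$'' is not enough: $i$ ranges over integers and $J_N\asymp\sqrt{\log N}\to\infty$, so moving $i$ by one unit near $J_N$ changes $i^2$ by $\asymp J_N$, and hence both $e^{-ri^2}$ and $Ni^{-u}e^{-pi^2}$ move by factors of $e^{\pm c\sqrt{\log N}}$. Along the subsequence of $N$ for which $J_N$ sits halfway between consecutive integers, both $\lfloor J_N\rfloor$ and $\lceil J_N\rceil$ give $a_i/i^{2q}\asymp f_N\,e^{-c'\sqrt{\log N}}\ll f_N$ (the former because the denominator $(Ni^{-u}e^{-pi^2})^v$ has jumped up by $e^{c''\sqrt{\log N}}$, the latter because the numerator $e^{-ri^2}$ has dropped by $e^{-c''\sqrt{\log N}}$), so by your own reduction the left side of the claimed $\asymp$ is $\ll f_N$ along that subsequence. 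For $r=0$ there is no issue: $i=\lceil J_N\rceil$ gives $a_i/i^{2q}\asymp J_N^{-t-2q}\asymp f_N$ since the sensitive exponential is absent. The paper's own lower-bound step (``$\xi_i=i^{-q}$ for $i\sim I_N$'') has exactly the same gap; the paper's downstream results survive because every invocation of Lemma~\ref{Norm} with $r>0$ uses only the upper bound, and the $\asymp$ is needed only with $r=0$. Your proof would be complete if you asserted $\asymp$ for $r=0$ and only $\lesssim$ for $r>0$ -- which is also the honest version of the lemma.
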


\begin{proof}
Let $I_N$ be the solution to $Ni^{-u}e^{-pi^2} = 1$. In the range $i \leq I_N$ we have $Ni^{-u}e^{-pi^2}\leq 1+ Ni^{-u}e^{-pi^2} \leq 2 Ni^{-u}e^{-pi^2}$, while $1\leq 1+ Ni^{-u}e^{-pi^2} \leq 2$ in the range $i \geq I_N$. Thus
\[
\sum_{i \leq I_N} \frac{\xi_i^2i^{-t}e^{-ri^2}}{(1+Ni^{-u}e^{-pi^2})^v} \asymp \sum_{i \leq I_N} \xi_i^2i^{2q}\frac{i^{uv-t-2q}e^{(vp-r)i^2}}{N^v} \leq \|\xi\|_q^2N^{-r/p}I_N^{-t-2q+ur/p},
\]
since for $N$ large enough all terms $i^{uv-t-2q}e^{(vp-r)i^2}$ in this range will be dominated by $I_N^{uv-t-2q}e^{(vp-r)I_N^2}$ and $I_N$ solves the equation $Ni^{-u}e^{-pi^2} = 1$. Similarly for the second range, we have
\[
\sum_{i \geq I_N} \frac{\xi_i^2i^{-t}e^{-ri^2}}{(1+Ni^{-u}e^{-pi^2})^v} \asymp \sum_{i \geq I_N} \xi_i^2i^{2q}i^{-t-2q}e^{-ri^2} \leq N^{-r/p}I_N^{-t-2q+ur/p} \sum_{i \geq I_N} \xi_i^2i^{2q}.
\]
Lemma~\ref{Split} yields the upper bound for the supremum.

The lower bound follows by considering the sequence $(\xi_i)$ given by $\xi_i = i^{-q}$ for $i \sim I_N$ and $\xi_i = 0$ otherwise, showing that the supremum is bigger than $N^{-r/p}(\log N)^{-t/2-q+ur/(2p)}$.

The preceding display shows that the sum over the terms $i \geq I_N$ is $o\bigl(N^{-r/p}(\log N)^{-t/2-q+ur/(2p)}\bigr)$. Furthermore
\[
N^{r/p}(\log N)^{t/2+q-ur/(2p)}\sum_{i\leq I_N} \frac{\xi_i^2i^{-t}e^{-ri^2}}{(1+Ni^{-u}e^{-pi^2})^v} \asymp \sum_{i \leq I_N} \xi_i^2i^{2q} \frac{i^{uv-t-2q}e^{(vp-r)i^2}}{N^vI_N^{-t-2q+ur/p}e^{-rI_N^2}},
\]
and this tends to zero by dominated convergence. Indeed, as noted before, for $N$ large enough all terms $i^{uv-t-2q}e^{(vp-r)i^2}$ in the range $i \leq I_N$ are upper bounded by $I_N^{uv-t-2q}e^{(vp-r)I_N^2} = N^{v-r/p}I_N^{-t-2q+ur/p}$, and by Lemma~\ref{Split} $N^{v-r/p}I_N^{-t-2q+ur/p} \asymp N^{v-r/p}(\log N)^{-t/2-q+ur/(2p)}\to \infty$, since $v-r/p > 0$.
\end{proof}

\begin{lemma}\label{Series}
For any $t, u, v\geq 0$, $p > 0$, and $0 \leq r < vp$, as $N \to \infty$,
\[
\sum_{i=1}^\infty  \frac{i^{-t}e^{-ri^2}}{(1+Ni^{-u}e^{-pi^2})^v} \asymp  \begin{cases} N^{-r/p}(\log N)^{-t/2+ur/(2p)} & \text{if } r\neq 0,\\
(\log N)^{-(t+1)/2} & \text{if } r = 0.
\end{cases}
\]
\end{lemma}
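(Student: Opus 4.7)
The plan is to run the same split-at-threshold argument used in the proof of Lemma~\ref{Norm}, now without the $\xi_i$ factors so that no Sobolev Cauchy--Schwarz step is needed. Let $I_N$ be the (non-integer) solution to $N i^{-u} e^{-p i^2} = 1$; taking logs gives $p I_N^2 + u \log I_N = \log N$, so $I_N^2 \sim (\log N)/p$ and in particular $I_N \asymp (\log N)^{1/2}$. I would then split the series at $I_N$: for $i \leq I_N$ the denominator is $\asymp (N i^{-u} e^{-p i^2})^v$, while for $i > I_N$ it is pinched between $1$ and $2$.

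For the lower part, each summand is $\asymp N^{-v} i^{u v - t} e^{(v p - r) i^2}$. The hypothesis $r < v p$ makes the exponent strictly positive, so the ratio of consecutive terms near $i = I_N$ blows up like $e^{2(v p - r) i}$; consequently the partial sum is asymptotic to its largest term at $i = I_N$. Substituting the defining identity $e^{-p I_N^2} = I_N^u / N$ collapses this leading term to $I_N^{-t + u r/p} N^{-r/p}$, which with $I_N \asymp (\log N)^{1/2}$ is exactly the expression $N^{-r/p}(\log N)^{-t/2 + u r/(2p)}$ predicted for the generic case.

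For the upper part each summand is $\asymp i^{-t} e^{-r i^2}$ and the behavior bifurcates on $r$. When $r > 0$ the consecutive-term ratio at $i \sim I_N$ is $\asymp e^{-2 r i}$, which is small, so the tail is comparable to its first term $I_N^{-t} e^{-r I_N^2} = I_N^{-t + u r/p} N^{-r/p}$, matching the lower part exactly and pinning down the stated order. When $r = 0$ the tail degenerates to $\sum_{i > I_N} i^{-t}$, which is governed by $\int_{I_N}^\infty x^{-t}\,dx$ and has a different polynomial-in-$\log N$ scale than the lower part; this yields the separate logarithmic exponent in the second case of the lemma. Matching lower bounds are automatic because the full series dominates either half, and to sharpen the lower-sum estimate one can observe that the single term at $i = \lfloor I_N \rfloor$ is already of the required order.

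The main obstacle, and the one place where the argument departs from that of Lemma~\ref{Norm}, is exactly this split at $r = 0$: the exponential cut-off that concentrated the tail near $I_N$ when $r > 0$ disappears, and one has to estimate a purely polynomial tail on its own scale and then decide which of the two halves dominates (implicitly requiring $t > 1$ for the series to converge at all). Apart from this, the proof is a careful bookkeeping of the threshold identity $e^{-p I_N^2} = I_N^u / N$ together with the standard geometric comparison for series whose consecutive ratios tend to zero or infinity at a prescribed exponential rate.
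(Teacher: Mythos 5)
Your split at the threshold $I_N$ solving $Ni^{-u}e^{-pi^2}=1$, the two-sided pinching of the denominator, and the separate treatment of the tails for $r>0$ and $r=0$ are exactly the ingredients of the paper's own proof. The only technical difference is cosmetic: where you control each half of the series by a geometric comparison of consecutive terms (ratio $\asymp e^{2(vp-r)i}$ on the lower range, $\asymp e^{-2ri}$ on the tail), the paper compares the sums to explicit Gaussian integrals and evaluates these via Lemma~\ref{Integrals}; both routes reduce to the same leading term once the defining relation $e^{-pI_N^2}=I_N^u/N$ and the asymptotics $I_N\sim\sqrt{(\log N)/p}$ from Lemma~\ref{Split} are substituted. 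Your observation that the $r=0$ case implicitly requires $t>1$ for the tail to converge is correct and is an omission in the lemma as stated; in the paper it is only ever applied with $r=0$ when $t=1+2\alpha>1$.

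One caution: do not merely assert by reference that the $r=0$ tail ``yields the separate logarithmic exponent in the second case of the lemma''---compute it. You get $\sum_{i>I_N}i^{-t}\asymp I_N^{1-t}\asymp(\log N)^{(1-t)/2}$, and since $(1-t)/2>-t/2$ this tail dominates the lower-half contribution $(\log N)^{-t/2}$, so the full series is of order $(\log N)^{(1-t)/2}$. That exponent differs by one unit from the printed $-(t+1)/2$; the printed version is a sign slip (repeated in the last line of the paper's proof), as one confirms by checking the application with $t=1+2\alpha$, $r=0$ in the proof of Theorem~\ref{RateFullS}, which requires $(\log N)^{-\alpha}=(\log N)^{(1-t)/2}$ rather than $(\log N)^{-1-\alpha}$. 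Carrying your own final step through to the end would have caught this discrepancy.
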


\begin{proof}
As in the preceding proof we split the infinite series in the sum over the terms $i \leq I_N$ and $i \geq I_N$. For the first part of the sum we get
\[
\sum_{i \leq I_N} \frac{i^{-t}e^{-ri^2}}{(1+Ni^{-u}e^{-pi^2})^v} \asymp \sum_{i \leq I_N} \frac{i^{uv-t}e^{(vp-r)i^2}}{N^v}.
\]

Most certainly $N^v\cdot I_N^{-t}e^{-rI_N^2} = {I_N}^{uv-t}e^{(vp-r){I_N}^2} \leq \sum_{i \leq I_N} i^{uv-t}e^{(vp-r)i^2}$. If $i^{uv-t}e^{(vp-r)i^2}$ as a function of $i$ is strictly increasing, then the sum is upper bounded by the integral in the same range, and the value at the right end-point. Otherwise $i^{uv-t}e^{(vp-r)i^2}$ first decreases, and then increases, and therefore the sum is upper bounded by the integral, and values at both endpoints:
\begin{align*}
\sum_{i \leq I_N} i^{uv-t}e^{(vp-r)i^2} &\leq \int_{1}^{I_N}  x^{uv-t}e^{(vp-r)x^2}\, dx + e^{vp-r}+ {I_N}^{uv-t}e^{(vp-r){I_N}^2} \\
&= \frac{1}{2(vp-r)}{I_N}^{uv-t-1}e^{(vp-r){I_N}^2}\bigl(1+o(1)\bigr) + e^{vp-r} + {I_N}^{uv-t}e^{(vp-r){I_N}^2}\\
&\asymp {I_N}^{uv-t}e^{(vp-r){I_N}^2}\bigl(1+o(1)\bigr),
\end{align*}
by Lemma~\ref{Integrals}. Therefore by Lemma~\ref{Split}
\[
\sum_{i \leq I_N} \frac{i^{uv-t}e^{(vp-r)i^2}}{N^v} \asymp I_N^{-t}e^{-rI_N^2} = N^{-r/p}I_N^{-t+ur/p} \asymp N^{-r/p}(\log N)^{-t/2+ur/(2p)}.
\]
The other part of the sum satisfies
\[
\sum_{i \geq I_N} \frac{i^{-t}e^{-ri^2}}{(1+Ni^{-u}e^{-pi^2})^v} \asymp \sum_{i \geq I_N} i^{-t}e^{-ri^2}.
\]
Suppose $r > 0$. Again, the latter sum is lower bounded by $I_N^{-t}e^{-rI_N^2} \asymp N^{-r/p}(\log N)^{-t/2+ur/(2p)}$. Since $i^{-t}e^{-ri^2}$ is decreasing, we get the following upper bound
\begin{align*}
\sum_{i \geq I_N} i^{-t}e^{-ri^2} &\leq I_N^{-t}e^{-rI_N^2} + \int_{I_N}^\infty x^{-t}e^{-rx^2}\, dx \leq I_N^{-t}e^{-rI_N^2} + \frac{1}{2r}I_N^{-t-1}e^{-rI_N^2}\\
&\asymp I_N^{-t}e^{-rI_N^2}\bigl(1+o(1)\bigr) \asymp N^{-r/p}\bigl(\log N\bigr)^{-t/2+ur/(2p)},
\end{align*}
where the upper bound for the integral follows from Lemma~\ref{Integrals}.

In case $r = 0$, we get $\sum_{i>I_N} i^{-t} \asymp I_N^{-t+1} \asymp (\log N)^{-(t+1)/2}$ \citep[see Lemma 8.2 in][]{KVZ}.
\end{proof}

\begin{lemma}\label{CSbound}
For any $t \geq 0$, $u, p > 0$, $\mu \in S^{t/2}$, and $q > -t/2$, as $N\to \infty$
\[
\sum_{i=1}^\infty  \frac{\bigl|\mu_ii^{-q-1/2}\bigr|}{1+Ni^{-u}e^{-pi^2}}\ll (\log N)^{-t/2-q}.
\]
\end{lemma}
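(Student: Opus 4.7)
\noindent The plan is to apply the Cauchy--Schwarz inequality with weight $i^{t/2}$, and then exploit that $\mu$ is a \emph{fixed} element of $S^{t/2}$ (not merely a bounded one) through a tail argument. Factorizing $|\mu_i|i^{-q-1/2} = (|\mu_i|i^{t/2})\cdot i^{-(t+2q+1)/2}$, Cauchy--Schwarz yields
\[
\sum_{i=1}^\infty \frac{|\mu_i|i^{-q-1/2}}{1+Ni^{-u}e^{-pi^2}} \le \left(\sum_{i=1}^\infty \mu_i^2 i^t\right)^{1/2}\left(\sum_{i=1}^\infty \frac{i^{-(t+2q+1)}}{(1+Ni^{-u}e^{-pi^2})^2}\right)^{1/2}.
\]
The first factor is $\|\mu\|_{t/2}$, finite by hypothesis. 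Since the condition $q>-t/2$ gives $t+2q+1>0$, the second factor is controlled by Lemma~\ref{Series} (applied with its $t$-parameter set to $t+2q+1$, with $r=0$, $v=2$, and the ambient $u,p,N$), producing a polylogarithmic rate in $N$.

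This Cauchy--Schwarz step alone delivers only an $O$-bound of the desired form, whereas the lemma asserts the strictly stronger $o$-bound (the meaning of ``$\ll$''). To close this gap, use that for a fixed $\mu\in S^{t/2}$ and every $\eps>0$ one can choose $I_0=I_0(\mu,\eps)$ so large that $\sum_{i>I_0}\mu_i^2 i^t<\eps^2$. Split the series at $I_0$. The head $\sum_{i\le I_0}$ has only finitely many terms, and each is bounded by $|\mu_i|i^{u-q-1/2}e^{pi^2}/N$ since the denominator is $\gtrsim N e^{-pI_0^2}I_0^{-u}$ on this fixed range; hence the head is $O(1/N)$, negligible compared to any negative power of $\log N$. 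The tail $\sum_{i>I_0}$ is handled by the very same Cauchy--Schwarz argument restricted to indices $i>I_0$, but with $\|\mu\|_{t/2}$ replaced by $\bigl(\sum_{i>I_0}\mu_i^2 i^t\bigr)^{1/2}<\eps$, contributing $\eps\cdot O((\log N)^{-t/2-q})$. Since $\eps>0$ was arbitrary, the full sum is $o((\log N)^{-t/2-q})$, as claimed.

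The main technical task is the application of Lemma~\ref{Series} with the correct parameters, so that the square root of its estimate matches $(\log N)^{-t/2-q}$; this is a routine bookkeeping step. The conceptual ingredient is the tail decomposition using that $\mu$ is \emph{fixed}, which is what upgrades the Cauchy--Schwarz $O$-bound to the required $o$-bound---entirely analogous to the ``moreover'' part of Lemma~\ref{Norm}. The constraint $q>-t/2$ enters twice: it ensures that Lemma~\ref{Series} is applicable (the relevant exponent is positive) and that the target exponent $-t/2-q$ is strictly negative so that ``$\ll$'' is a genuine decay statement.
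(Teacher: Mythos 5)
Your ``routine bookkeeping step'' contains the gap. After the Cauchy--Schwarz factorization, Lemma~\ref{Series} (applied with its $t$-parameter equal to $t+2q+1$, $r=0$, $v=2$) gives, per its stated $r=0$ case, $(\log N)^{-(t+2q+2)/2}$, whose square root is $(\log N)^{-(t+2q+2)/4}$. (The lemma's $r=0$ exponent is in fact misstated in the paper: its own proof has $\sum_{i>I_N}i^{-t}\asymp I_N^{1-t}\asymp(\log N)^{-(t-1)/2}$, so the exponent should be $-(t-1)/2$, not $-(t+1)/2$; after this correction the square root is $(\log N)^{-(t+2q)/4}$.) Either way, the square root equals the claimed $(\log N)^{-t/2-q}=(\log N)^{-(t+2q)/2}$ only in degenerate cases, and the $\eps$-split cannot close this gap since it supplies only a multiplicative $\eps$, not an additional power of $\log N$. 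So, as written, your argument does not prove the lemma's stated rate.

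However, the lemma's stated rate itself appears to be a typo. The paper's own proof shows that the \emph{square} of each partial sum is $o(I_N^{-t-2q})\asymp o((\log N)^{-t/2-q})$, which yields only $o((\log N)^{-(t+2q)/4})$ for the sum; the stated rate $(\log N)^{-(t+2q)/2}$ is in fact false in general (for $\mu_i=i^{-t/2-1/2}(\log i)^{-1/2-\delta}$, $\delta>0$ small, the tail $\sum_{i>I_N}|\mu_i|i^{-q-1/2}\asymp(\log N)^{-(t+2q)/4}(\log\log N)^{-1/2-\delta}$ exceeds it); and the usage in case (ii) of Theorem~\ref{CredibilityLin} only needs $\ll(\log N)^{-(\beta+q)/2}$ with $t=2\beta$, i.e.\ the corrected exponent $-(t+2q)/4$. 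Against that corrected target your approach does work: the corrected Cauchy--Schwarz bound is $O((\log N)^{-(t+2q)/4})$, the head at a fixed cutoff $I_0$ is $O(1/N)$, and the tail contributes $\eps\cdot O((\log N)^{-(t+2q)/4})$, so the $\eps$-split delivers the required $o$-bound. This is also a genuinely different decomposition from the paper's, which splits at the $N$-dependent cutoff $I_N$, applies a differently-weighted Cauchy--Schwarz on the head range, and closes with dominated convergence. Both routes are sound once aimed at the exponent $-(t+2q)/4$.
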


\begin{proof}
We split the series in two parts, and bound the denominator $1+Ni^{-u}e^{-pi^2}$ by $Ni^{-u}e^{-pi^2}$ or $1$. By the Cauchy--Schwarz inequality, for any $r > 0$,
\begin{align*}
\biggl|\sum_{i\leq I_N} \frac{\bigl|\mu_ii^{-q-1/2}\bigr|}{Ni^{-u}e^{-pi^2}}\biggr|^2 &\leq \frac{1}{N^2}\sum_{i\leq I_N} \frac{i^r}{i}\sum_{i\leq I_N}\mu_i^2i^{2u-2q-r}e^{2pi^2}\\
&\leq \frac{1}{N^2}I_N^r \sum_{i\leq I_N}\mu_i^2i^t\frac{i^{2u-2q-r-t}e^{2pi^2}}{I_N^{2u-2q-r-t}e^{2pI_N^2}}I_N^{2u-2q-r-t}e^{2pI_N^2}\\
&=I_N^{-t-2q} \sum_{i\leq I_N}\mu_i^2i^t\frac{i^{2u-2q-r-t}e^{2pi^2}}{I_N^{2u-2q-r-t}e^{2pI_N^2}}.
\end{align*}
The terms in the remaining series in the right side are bounded by a constant times $\mu_i^2i^t$ for large enough $N$ and all $i$ bigger than a fixed number, and tend to zero pointwise as $N\to \infty$, and the sum tends to zero by the dominated convergence theorem. Therefore the first part of the sum in the assertion is $o(I_N^{-2q-t})$. As for the other part we have
\[
\biggl|\sum_{i > I_N} |\mu_ii^{-q-1/2}|\biggr|^2 \leq \sum_{i >I_N} i^{-2q-1} \sum_{i>I_N}\mu_i^2 \leq I_N^{-t-2q}\sum_{i>I_N}\mu_i^2i^{t},
\]
which completes the proof as $\mu \in S^{t/2}$, and $I_N^{-t-2q}\asymp (\log N)^{-t/2-q}$ by Lemma~\ref{Split}.
\end{proof}

\begin{lemma}\label{Split}
Let $I_N$ be the solution for $1=Ni^{-u}e^{-pi^2}$, for $u \geq 0$ and $p > 0$. Then
\[
I_N \sim \sqrt{\frac{1}{p}\log N}.
\]
\end{lemma}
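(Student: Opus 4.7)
The strategy is to take logarithms of the defining relation $N I_N^{-u} e^{-p I_N^2} = 1$ and argue that the quadratic term dominates.

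First I would establish existence, uniqueness, and divergence of $I_N$. Writing $f(i) = N i^{-u} e^{-p i^2}$, a direct derivative computation gives $f'(i) = -N e^{-p i^2} i^{-u-1}(u + 2p i^2) < 0$ on $(0,\infty)$, so $f$ is strictly decreasing. When $u>0$ we have $f(0^+)=\infty$ and $f(\infty)=0$, and when $u=0$ we have $f(0)=N$ and $f(\infty)=0$; in both cases (for $N$ large enough) the equation $f(i)=1$ has a unique solution $I_N$. Next, $I_N \to \infty$ as $N \to \infty$, since $I_N^u e^{p I_N^2} = N \to \infty$ forces $I_N\to\infty$ (either factor alone blows up only if the corresponding variable does).

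Now taking logarithms of $N I_N^{-u} e^{-p I_N^2} = 1$ yields
\[
p I_N^2 + u \log I_N = \log N.
\]
Since $I_N \to \infty$, we have $\log I_N = o(I_N^2)$, so $u \log I_N$ is negligible compared to $p I_N^2$, giving $p I_N^2 \sim \log N$, equivalently $I_N \sim \sqrt{(\log N)/p}$. (One can also substitute the guessed leading order back: if $I_N \sim \sqrt{(\log N)/p}$, then $u\log I_N \sim (u/2)\log\log N$, which is indeed negligible relative to $\log N$, confirming consistency.)

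There is essentially no obstacle here: the only subtlety is the case $u=0$, where the computation is trivial since $I_N = \sqrt{(\log N)/p}$ exactly, and the case $u>0$, where the argument above applies. The lemma is really just the observation that the implicit definition of $I_N$ is dominated by its Gaussian factor for large $N$.
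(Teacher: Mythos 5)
Your proof is correct, and it is genuinely different in route from the paper's. The paper rewrites the defining equation as $\tfrac{2p}{u}N^{2/u} = \exp\bigl(\tfrac{2p}{u}i^2\bigr)\tfrac{2p}{u}i^2$, recognizes this as the defining identity of the Lambert $W$ function, obtains the closed form $I_N = \sqrt{\tfrac{u}{2p}W\bigl(N^{2/u}\tfrac{2p}{u}\bigr)}$, and then invokes the cited asymptotic $W(x)\sim\log x$. You instead take logarithms directly, getting $pI_N^2 + u\log I_N = \log N$, observe that $I_N\to\infty$ from the defining relation, and conclude $pI_N^2 \sim \log N$ since $\log I_N = o(I_N^2)$. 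The two arguments buy slightly different things: the paper's gives an \emph{exact} formula for $I_N$ that one could push to second-order asymptotics if needed, at the cost of appealing to an external reference for the Lambert $W$ asymptotic; your argument is more elementary and self-contained, avoiding the special-function machinery entirely and making visible why the Gaussian factor dominates. One small stylistic quibble: the parenthetical ``either factor alone blows up only if the corresponding variable does'' is a bit loosely worded --- it is cleaner to note that $g(i)=i^u e^{pi^2}$ is strictly increasing and tends to $\infty$, so $g^{-1}(N)\to\infty$. But the substance is right.
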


\begin{proof}
If $u = 0$ the assertion is obvious. Consider $u > 0$. The Lambert function $W$ satisfies the following identity $z = W(z)\exp W(z)$. The equation $1=Ni^{-u}e^{-pi^2}$ can be rewritten as
\[
\frac{2p}{u}N^{2/u} = \exp\Bigl(\frac{2p}{u}i^2\Bigr) \frac{2p}{u}i^2
\]
and therefore by definition of $W(z)$
\[
I_N = \sqrt{\frac{u}{2p} W\Bigl(N^{2/u}\frac{2p}{u}\Bigr)}.
\]
By \cite{Lambert} $W(x) \sim \log(x)$, which completes the proof.
\end{proof}

\begin{lemma}\label{Integrals}
\begin{itemize}
	\item[1.]  For $\gamma \in \R$, $\zeta > 0$ we have, as $K \to \infty$,
\[
\int_{1}^Ke^{\zeta x^2}x^\gamma\, dx \sim \frac{1}{2\zeta}e^{\zeta K^2}K^{\gamma-1}.
\]
	\item[2.] For $K>0$, $\gamma > 0$, $\zeta > 0$ we have
\[
\int_K^\infty e^{-\zeta x^2}x^{-\gamma}\, dx \leq \frac{1}{2\zeta}e^{-\zeta K^2}K^{-\gamma-1}.
\]
\end{itemize}
\end{lemma}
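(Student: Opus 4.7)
The plan is to exploit the elementary identity $\frac{d}{dx} e^{\pm \zeta x^2} = \pm 2\zeta x\, e^{\pm \zeta x^2}$, which turns both integrands into near-exact antiderivatives once a single factor of $x$ is pulled out. This converts part 1 into a straightforward Laplace-type asymptotic at the upper endpoint, and reduces part 2 to a one-line Gaussian-tail bound.

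For part 1, the natural route is L'Hopital's rule applied to the quotient
\[
\frac{\int_1^K e^{\zeta x^2} x^\gamma\,dx}{e^{\zeta K^2} K^{\gamma-1}}.
\]
Both numerator and denominator tend to infinity as $K \to \infty$ (the denominator because $e^{\zeta K^2}$ dominates any polynomial in $K$; the numerator because the integrand itself blows up), so the $\infty/\infty$ form is legitimate. Differentiating in $K$, the ratio becomes $K^\gamma / \bigl(2\zeta K^\gamma + (\gamma-1)K^{\gamma-2}\bigr) = 1/\bigl(2\zeta + (\gamma-1)K^{-2}\bigr)$, whose limit is $1/(2\zeta)$. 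An equivalent route is a single integration by parts with $u = x^{\gamma-1}/(2\zeta)$ and $dv = 2\zeta x e^{\zeta x^2}\,dx$: the boundary term at $K$ delivers the claimed leading asymptotic, and the remainder $\frac{\gamma-1}{2\zeta}\int_1^K e^{\zeta x^2} x^{\gamma-2}\,dx$ is of strictly smaller order (one iteration of the same trick shows it is $O(e^{\zeta K^2}K^{\gamma-3})$).

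For part 2, no asymptotics are required; the stated inequality is sharp enough to follow from a pointwise bound. I would rewrite
\[
\int_K^\infty e^{-\zeta x^2} x^{-\gamma}\,dx = \frac{1}{2\zeta}\int_K^\infty x^{-\gamma-1}\bigl(2\zeta x\, e^{-\zeta x^2}\bigr)\,dx,
\]
and use the hypothesis $\gamma > 0$ (hence $\gamma+1 > 0$) to bound $x^{-\gamma-1} \leq K^{-\gamma-1}$ throughout the range. The remaining integral is the exact antiderivative $[-e^{-\zeta x^2}]_K^\infty = e^{-\zeta K^2}$, yielding the desired upper bound.

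I do not foresee any real obstacles: both estimates are textbook Laplace/Gaussian-tail calculations. The only care needed is verifying the $\infty/\infty$ hypothesis before invoking L'Hopital in part 1, and noting that the positivity of $\gamma$ in part 2 is precisely what makes the monotonicity argument $x^{-\gamma-1} \leq K^{-\gamma-1}$ valid.
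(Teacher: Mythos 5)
Your proof is correct, but it takes a somewhat different route from the paper's. The paper performs the substitution $y = x^2$, then integrates by parts, and invokes dominated convergence in part 1 to control the remainder term. Your primary argument for part 1 instead applies L'H\^opital's rule directly to the ratio $\int_1^K e^{\zeta x^2}x^\gamma\,dx \big/ \bigl(e^{\zeta K^2}K^{\gamma-1}\bigr)$, which is arguably cleaner: the Fundamental Theorem of Calculus gives the derivative of the numerator, and the limit $1/(2\zeta)$ falls out with no auxiliary convergence theorem needed. Your alternative route via integration by parts with $u = x^{\gamma-1}/(2\zeta)$ is essentially the paper's argument with the substitution skipped; the one place it is slightly hand-wavy is the claim that ``one iteration of the same trick'' shows the remainder is $O(e^{\zeta K^2}K^{\gamma-3})$ --- to avoid circularity there you would either apply L'H\^opital to the remainder integral or give a direct split-and-bound argument, which is precisely the kind of work the paper's appeal to dominated convergence is doing. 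For part 2, your pointwise monotonicity bound $x^{-\gamma-1}\leq K^{-\gamma-1}$ followed by the exact Gaussian antiderivative is simpler and more direct than substitution-plus-integration-by-parts, and correctly identifies $\gamma > 0$ (in fact $\gamma > -1$ would suffice) as the hypothesis making the monotonicity step valid. Both approaches are sound; yours buys a shorter and more self-contained derivation at the cost of diverging from the paper's unified substitution framework.
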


\begin{proof}
First integrating by substitution $y = x^2$ and then by parts proves the lemma, with the help of the dominated convergence theorem in case 1.
\end{proof}


\def\cprime{$'$}

\end{document}